\newtheorem{thm}{Theorem}[section]
\newtheorem{cor}[thm]{Corollary}
\newtheorem{lem}[thm]{Lemma}
\theoremstyle{definition}
\newtheorem{example}[thm]{Example}
\theoremstyle{definition}
\newtheorem{defn}[thm]{Definition}
\theoremstyle{remark}
\newtheorem{rem}[thm]{Remark}
\numberwithin{equation}{section}
\begin{document}

\title{On semi-open codes and bi-continuing almost everywhere codes}
\author{D. Ahmadi Dastjerdi and S. Jangjooye Shaldehi }%
\address{Faculty of Mathematics , University of Guilan}
\email{dahmadi1387@gmail.com, sjangjoo90@gmail.com}%

\subjclass[2010]{37B10, 37B40, 05C50}%
\keywords{semi-open, almost one-to-one, shift of finite type, sofic, synchronized, continuing code.}%

\begin{abstract}
 We will show that a system is synchronized if and only if it has a cover whose cover map is semi-open.
Also, any factor code on an irreducible sofic shift is semi-open and the image of a synchronized system by a semi-open code is synchronized.  On the other side, right-closing semi-open extension of an irreducible shift of finite type is of finite type.
 Moreover, we give conditions on finite-to-one factor codes to be open and show that any semi-open code on a synchronized system is bi-continuing a.e.. We give some sufficient conditions for a right-continuing a.e. factor code being right-continuing everywhere.
\end{abstract}
\maketitle
\section{Introduction}

 A map $ \phi:X\rightarrow Y $ is called  \emph{open map} if images of open sets are open and 
 is called \emph{semi-open (quasi-interior)} if images of open sets have non-empty interior. 
In dynamical systems, they are of interest when they appear as factor map or as it is called factor code in symbolic dynamics.
There are many classes of open and specially semi-open factor maps. For instance, any factor map between minimal compacta is semi-open \cite{Au} and all surjective
cellular automata are semi-open \cite[Theorem 2.9.3]{SM}.
As one may expect, there are some strict restrictions for a factor map being open; nonetheless, some important classes do exist. As an example, if $X$ and $Y$ are compact minimal spaces, there are almost one-to-one minimal extensions $X'$ and $Y'$ by $\psi_1$ and $\psi_2$ respectively and an open factor map $ \phi':X'\rightarrow Y' $ such that $ \psi_{2}\circ\phi'=\phi\circ\psi_1 $ \cite{A2}. Here, our investigation is
mainly confined on semi-open factor codes between shift spaces. For open factor codes between shift spaces see \cite{J2, J1}. 
A stronger notion to a semi-open map is the notion of an irreducible map which will be of our interest as well. A map
$  \phi:X\rightarrow Y$ is called \emph{irreducible} if the only closed set $A\subseteq X  $ for which $ \phi(A)=\phi(X) $ is
$A=X  $. 

A summary of our results is as follows. In section \ref{properties}, we show that a coded system $X$ is synchronized if and only if there is a cover 
$\mathcal G=(G,\,\mathcal L)$ so that its cover map $\mathcal L_{\infty}$ is semi-open and if $ \mathcal G=(G, \mathcal L) $ is
an irreducible right-resolving cover for $X$ with a magic word, then $ \mathcal L_{\infty} $ is semi-open (Theorem \ref{r-r magic}). Also, when $ \mathcal G $ is Fischer cover, $ \mathcal L_{\infty} $ is irreducible (Theorem \ref{Fischer}). 
Furthermore, any factor code on an irreducible sofic shift is semi-open (Corollary \ref{new}). Theorem \ref{right closing} implies that a right-closing semi-open extension of an irreducible shift of finite type is of finite type. 
Theorem \ref{s to s} shows that the image  of a synchronized system  is  again synchronized under a semi-open code and so providing a class of non-semi-open codes, mainly, codes factoring synchronized systems over non-synchronized ones.

We also consider bi-continuing and bi-continuing almost everywhere (a.e.) codes. A right-continuing or u-eresolving code is a code which is surjective on
each unstable set and it is a natural dual version of a right-closing
code and plays a fundamental role in the class of infinite-to-one codes \cite{Boyle}. In Theorem \ref{bi-retract}, we give conditions on bi-continuing a.e. factor codes to be open and also by Corollary \ref{cor} and Theorem \ref{Ballier}, we give some sufficient conditions for a right-continuing a.e. factor code being right-continuing everywhere. Finally, we show that any semi-open code on a synchronized system is bi-continuing a.e. (Theorem \ref{syn bi-continuing a.e.}).

\section{Background and Notations}

Let $\mathcal A$ be a non-empty finite set. The full $\mathcal
A$-shift
 denoted by ${\mathcal A}^{\mathbb Z}$, is the collection of all bi-infinite sequences of symbols from ${\mathcal A}$.
A block (or word) over ${\mathcal A}$ is a finite sequence of
symbols from ${\mathcal A}$. The \emph{shift map} on ${\mathcal A}^{\mathbb Z}$ is the map $\sigma$ where
$\sigma(\{x_i\})=\{y_i\}$ is defined by $y_{i}=x_{i+1}$. The pair $(\mathcal {A}^{\mathbb Z},\,\sigma)$ is called the \emph{full shift} and any closed invariant set of that is called a \emph{shift space} over $\mathcal{A}$.

Denote by ${\mathcal B}_{n}(X)$  the set of all admissible $n$-words and let ${\mathcal B}(X)=\bigcup_{n=0}^{\infty}{\mathcal B}_{n}(X)$
be  the \emph{language} of $X$.
For $ u\in\mathcal B(X) $, let the \emph{cylinder}
$ [u] $ be the set $ \{x\in X:\,x_{[l,l+|u|-1]}=u\} $. For $l\geq0  $ and  $ |u|=2l+1 $, $ [u] $ is called a \emph{central} $2l+1  $
cylinder. 

Let ${\mathcal A}$ and ${\mathcal D}$ be alphabets and $X$ a
subshift over ${\mathcal A}$. For $m,\,n\in \mathbb Z$ with $-m
\leq n$, define the \emph{$(m+n+1)$-block map} $\Phi: {\mathcal
B}_{m+n+1}(X) \rightarrow {\mathcal D}$ by
\begin{equation}\label{2.1} 
y_{i}=\Phi(x_{i-m}x_{i-m+1}...x_{i+n})=\Phi(x_{[i-m,i+n]})
\end{equation}
where $y_{i}$ is a symbol in ${\mathcal D}$. The map $\phi=\Phi_{\infty}^{[-m,n]}: X
\rightarrow {\mathcal D}^{\mathbb Z}$ defined by $y=\phi(x)$ with
$y_{i}$ given by \ref{2.1} is called the \emph{code} induced by
$\Phi$. If $ m=n=0 $, then $ \phi $ is called \emph{$ 1 $-block code} and $ \phi=\Phi_{\infty} $. An onto code $\phi: X \rightarrow Y$ is called a
\emph{factor code}.

A point $  x$ in a shift space $  X$ is \emph{doubly transitive} if every
block in $  X$ appears in $  x$ infinitely often to the left and to the right. Let $ \phi:X\rightarrow Y $ be a factor code. If
there is a positive integer $  d$ such that every doubly transitive point of $  Y$
has exactly $  d$ pre-images under $ \phi $, then we call $  d$ the \emph{degree} of $ \phi $.

A code $ \phi:X\rightarrow Y  $ is called \emph{right-closing} (resp. \emph{right-continuing}) if whenever $x\in X  $, $ y\in Y $ and $ \phi(x) $ is left
asymptotic to $  y$, then there exists at most (resp. at least) one $ \overline{x}\in X $ such that $ \overline{x} $ is left asymptotic
to $  x$ and $ \phi(\overline{x})=y $. A \emph{left-closing} (resp. \emph{left-continuing}) code
is defined similarly. If $  \phi$ is both left and right-closing (resp. continuing), it is called \emph{bi-closing} (resp. \emph{bi-continuing}). An integer $ n\in\mathbb Z^{+} $ is called a \emph{(right-continuing) retract} of a right-continuing code $ \phi:X\rightarrow Y  $ if, whenever $x\in X  $ and $ y\in Y $ with $ \phi(x)_{(-\infty,0]}=y_{(-\infty,0]} $, we can find $ \overline{x}\in X $ such that $ \phi(\overline{x})=y $ and $ x_{(-\infty,-n]}=\overline{x}_{(-\infty,-n]} $ \cite{J2}.

 Let $G$ be a directed graph and $  \mathcal V$ (resp. $\mathcal E$) the set of its vertices (resp. edges) which is supposed to be countable.
 An \emph{edge shift}, denoted by $X_{G}$,  is a shift space which consist of all
  bi-infinite sequences of edges from $ \mathcal E $. A graph $G$ is called \emph{locally finite}, if it has finite out-degree and finite in-degree at any vertex. Recall that $X_G$ is locally compact if and only if $G$ is locally finite.

Let $v\sim w  $ be an  equivalence relation
on $  \mathcal V$ whenever there is a path
from $  v$ to $  w$ and vice versa. For an 
equivalence class, consider all vertices together with
all edges whose endpoints are in that equivalence class. Thus a subgragh called the (irreducible) \emph{component} of $G$ associated to that class arises; and if it is non-empty, then an irreducible subshift
of $X_{G}$, called (irreducible)
component of $X_{G}$ is defined.
There is no subshift of $X_{G}$ which is irreducible
and contains a component of $X_{G}$ properly.

A labeled graph ${\mathcal G}$ is a pair $(G,{\mathcal L})$ where
$G$ is a graph and  
${\mathcal L}: {\mathcal E} \rightarrow {\mathcal A}$ its labeling. Associated to $\mathcal G$, a space 
$$ X_{{\mathcal G}}=\text{closure}\{{\mathcal L}_{\infty}(\xi): \xi \in X_{G}\}=\overline{{\mathcal L}_{\infty}(X_{G})}$$
is defined and ${\mathcal G}$ is called a \emph{presentation} (or \emph{cover}) of $X_{{\mathcal
G}}$. When $G$ is a finite graph and hence compact, $X_{{\mathcal G}}={\mathcal L}_{\infty}(X_{G})$ is called a \emph{sofic shift}. Call $F(I)=\{u:\,u$ is the label of some paths starting at $I\}$ the \emph{follower set} of $I$.

 An irreducible sofic shift is called \emph{almost-finite-type} (AFT)
 if it has a bi-closing presentation \cite{LM}. A (possibly reducible) sofic shift that has a bi-closing presentation is called an \emph{almost Markov}
shift.  A shift space $X$ has \emph{specification with variable gap length} (SVGL) if there exists $N \in \mathbb N$ such that for all $u,v \in {\mathcal B}(X)$, there exists $w \in {\mathcal B}(X)$ with $uwv \in {\mathcal B}(X) $ and $|w|\leq N$.

A word $v \in {\mathcal B}(X)$ is \emph{synchronizing} if whenever $uv,\,vw\in{\mathcal B}(X)$,
we have $uvw \in {\mathcal B}(X)$. An
irreducible shift space $X$ is a \emph{synchronized system} if it has a synchronizing word. Let ${\mathcal G}=(G,{\mathcal L})$ be a labeled graph. A word $ w\in\mathcal B(X_{\mathcal G}) $
is a \emph{magic} 
word if all paths in $  G$ presenting $  w$ terminate at the
same vertex.

A labeled graph ${\mathcal G}=(G,{\mathcal L})$ is \emph{right-resolving} if for each vertex $I$ of $G$ the edges starting at $I$ carry
different labels. A \emph{minimal right-resolving presentation} of a sofic shift $X$ is a right-resolving presentation
 having the fewest vertices among all right-resolving presentations of $X$. It is unique up to isomorphism \cite[Theorem 3.3.18]{LM} and is called the \emph{Fischer cover} of $ X $.

Now we review the concept of the Fischer cover for a not necessarily sofic system developed in \cite{FF}.
Let $ x\in\mathcal B(X) $ and call $ x_{+}=(x_{i})_{i\in\mathbb Z^{+}} $ (resp. $ x_{-}=(x_{i})_{i<0} $) the \emph{right (resp. left) infinite $ X $-ray}. For $ x_{-} $, its follower set is defined as $ \omega_{+}(x_{-})=\{x_{+}\in X^{+}:\,x_{-}x_{+}$ is a point in $X\} $.
Consider the collection of all follower sets $ \omega_{+}(x_{-})$ as the set of vertices of a graph $ X^{+} $. There is an edge from $ I_{1} $ to $ I_{2} $ labeled $ a $ if and only if there is a $ X $-ray $ x_{-} $ such that $ x_{-}a $ is a $ X $-ray and $I_{1}=\omega_{+}(x_{-}) $, $ I_{2}=\omega_{+}(x_{-}a) $. This labeled graph is called the \emph{Krieger graph} for $ X $.
If $ X $ is a synchronized system with synchronizing word $ \alpha $, the irreducible component of the Krieger graph containing the vertex $ \omega_{+}(\alpha)  $ is called the \emph{(right) Fischer cover} of $ X $.

The entropy of a shift space $X$ is defined by $h(X)=\lim_{n
\rightarrow \infty}(1/n)\log|{\mathcal B}_{n}(X)|$. A component $ X_{0} $ of
a shift space $  X$ is called \emph{maximal} if $h(X_{0})=h(X)  $.

\section{open, Semi-open and irreducible Codes}\label{properties}

We start with some necessary lemmas.
\begin{lem}\label{circ}\cite[Lemma 1.4]{BBS}
Assume $ \phi:X\rightarrow Y $ and $ \psi:Y\rightarrow Z$ are surjections such that
$ \psi\circ\phi $ is semi-open. Then, $  \psi$ is semi-open as well. Moreover, if $  \psi$ is irreducible,
then also $  \phi$ is semi-open.
\end{lem}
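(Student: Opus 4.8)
The plan is to treat the two assertions separately: the first is a one‑line diagram chase and the second carries all the content.

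For the first assertion I would take a non-empty open set $V\subseteq Y$ and note that $\phi^{-1}(V)$ is non-empty (since $\phi$ is onto) and open. Semi-openness of $\psi\circ\phi$ then gives that $(\psi\circ\phi)(\phi^{-1}(V))$ has non-empty interior, and since $\phi(\phi^{-1}(V))\subseteq V$ we have $(\psi\circ\phi)(\phi^{-1}(V))\subseteq\psi(V)$; hence $\psi(V)$ has non-empty interior. This step uses nothing beyond continuity and surjectivity of $\phi$.

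For the second assertion, assume $\psi$ is irreducible and let $U\subseteq X$ be non-empty open; the goal is to produce a non-empty open subset of $Y$ contained in $\phi(U)$. First I would use (local) compactness to pick a non-empty open $W$ with $\overline{W}$ compact and $\overline{W}\subseteq U$, and set $K:=\overline{W}$, so that $\phi(K)$ is \emph{closed} in $Y$ and $\phi(K)\subseteq\phi(U)$; it then suffices to find a non-empty open set inside $\phi(K)$. Applying semi-openness of $\psi\circ\phi$ to $W$ yields a non-empty open $O\subseteq Z$ with $O=\mathrm{int}\,(\psi\circ\phi)(W)\subseteq\psi(\phi(K))$. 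The heart of the argument is the claim that the restricted map $\psi|_{\psi^{-1}(O)}\colon\psi^{-1}(O)\to O$ is again irreducible: given a closed subset $A$ of $\psi^{-1}(O)$ with $\psi(A)=O$, write $A=\overline{A}^{\,Y}\cap\psi^{-1}(O)$ and consider $B:=\overline{A}^{\,Y}\cup(Y\setminus\psi^{-1}(O))=A\cup\psi^{-1}(Z\setminus O)$, which is closed in $Y$ and satisfies $\psi(B)=\psi(A)\cup(Z\setminus O)=Z$; irreducibility of $\psi$ forces $B=Y$, whence $A=B\cap\psi^{-1}(O)=\psi^{-1}(O)$. Now put $P:=\phi(K)\cap\psi^{-1}(O)$. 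Since $O\subseteq\psi(\phi(K))$, every point of $O$ has a $\psi$-preimage lying in $\phi(K)$, and that preimage automatically lies in $\psi^{-1}(O)$, so $\psi(P)=O$. As $\phi(K)$ is closed, $P$ is closed in $\psi^{-1}(O)$, and the irreducibility just established gives $P=\psi^{-1}(O)$. Hence $\psi^{-1}(O)=P\subseteq\phi(K)\subseteq\phi(U)$, and $\psi^{-1}(O)$ is a non-empty open subset of $Y$, so $\phi$ is semi-open.

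The step I expect to be the real obstacle is precisely the transfer of irreducibility to the saturated open restriction $\psi^{-1}(O)\to O$ (the set $B$ above is the device that makes this go through), together with the reduction of $U$ to a set $K$ whose $\phi$-image is closed: without closedness of $\phi(K)$ one only obtains that $\phi(U)\cap\psi^{-1}(O)$ is dense in $\psi^{-1}(O)$, which is not enough to conclude non-empty interior. Everything else — the inclusions $\phi(\phi^{-1}(V))\subseteq V$, the identity $\psi(\psi^{-1}(S))=S$ for surjective $\psi$, and the choice of a relatively compact $W$ with $\overline{W}\subseteq U$ — is routine.
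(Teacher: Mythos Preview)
The paper does not prove this lemma at all --- it is quoted directly from \cite[Lemma~1.4]{BBS} without proof --- so there is nothing in the paper to compare your argument against.

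That said, your proof is correct in the (locally) compact Hausdorff setting, which is the ambient hypothesis both in \cite{BBS} (compact Hausdorff spaces) and in every application made in this paper (shift spaces and edge shifts of locally finite graphs). The first assertion is exactly the one-line diagram chase you give. For the second assertion, your two key moves --- shrinking $U$ to a compact $K$ so that $\phi(K)$ is closed in $Y$, and showing that irreducibility of $\psi$ descends to the restriction $\psi|_{\psi^{-1}(O)}\colon \psi^{-1}(O)\to O$ via the auxiliary closed set $B=A\cup\psi^{-1}(Z\setminus O)$ --- are the standard devices, and you execute them correctly. You should make the topological hypotheses explicit: local compactness of $X$ (to produce $W$ with $\overline{W}$ compact inside $U$) and Hausdorffness of $Y$ (so that the compact image $\phi(K)$ is closed). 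Without the closedness of $\phi(K)$ your own diagnosis is right: one would only get density of $\phi(U)\cap\psi^{-1}(O)$ in $\psi^{-1}(O)$, which is insufficient.
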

A
continuous map $\phi:X\rightarrow Y  $  is \emph{almost 1-to-1} if the set of the points $ x\in X $ such that $ \phi^{-1}(\phi(x))=\{x\} $ is dense in $  X$.
\begin{lem}\label{lem:irr}
Let $ \phi:X\rightarrow Y $ and $ \psi:Y\rightarrow Z$ be two factor codes such that
$ \psi\circ\phi $ is irreducible. Then, $  \phi$ and $  \psi$ are irreducible.
\end{lem}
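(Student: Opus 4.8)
The plan is to unwind the definition of irreducibility in each of the two cases separately, the inner factor $\phi$ being the genuinely trivial one and the outer factor $\psi$ requiring one small pull-back trick. Recall that a surjection $f\colon S\to T$ is irreducible exactly when the only closed set $A\subseteq S$ with $f(A)=T$ is $A=S$.

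First I would handle $\phi$. Take any closed set $A\subseteq X$ with $\phi(A)=\phi(X)=Y$ and simply apply $\psi$ to both sides: then
\[
(\psi\circ\phi)(A)=\psi(\phi(A))=\psi(Y)=Z=(\psi\circ\phi)(X).
\]
Since $\psi\circ\phi$ is irreducible, this forces $A=X$, so $\phi$ is irreducible. Nothing beyond surjectivity of $\phi$ and $\psi$ is used here.

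Next I would handle $\psi$. Take a closed set $B\subseteq Y$ with $\psi(B)=\psi(Y)=Z$ and pull it back along $\phi$, setting $A=\phi^{-1}(B)$. Continuity of $\phi$ makes $A$ closed, and surjectivity of $\phi$ gives $\phi(A)=\phi(\phi^{-1}(B))=B\cap\phi(X)=B$. Hence $(\psi\circ\phi)(A)=\psi(B)=Z=(\psi\circ\phi)(X)$, and irreducibility of $\psi\circ\phi$ yields $A=X$; applying $\phi$ once more gives $B=\phi(X)=Y$, so $\psi$ is irreducible.

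The only point that needs a moment's care is the identity $\phi(\phi^{-1}(B))=B$, which holds precisely because $\phi$ is onto; everything else is a direct substitution, so I do not anticipate any real obstacle. It is worth contrasting this with Lemma \ref{circ}: for semi-openness of the composition one only gets $\psi$ semi-open for free and must assume $\psi$ itself is irreducible before concluding anything about $\phi$, whereas genuine irreducibility of $\psi\circ\phi$ descends to \emph{both} factors automatically.
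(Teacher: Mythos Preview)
Your proof is correct and proceeds by a genuinely different --- and more elementary --- route than the paper's. The paper argues by contrapositive through the almost one-to-one characterization: invoking Whyburn's \emph{Analytic Topology} for the equivalence between irreducibility and being almost 1-to-1 (for continuous surjections of the relevant spaces), it observes that if either $\phi$ or $\psi$ failed to be almost 1-to-1 then the composite $\psi\circ\phi$ could not be almost 1-to-1 either, which contradicts the hypothesis. Your argument, by contrast, unwinds the closed-set definition of irreducibility directly, using nothing beyond continuity and surjectivity of $\phi$ and the pull-back $A=\phi^{-1}(B)$. The payoff is self-containment: no external reference is needed, and the proof applies verbatim to continuous surjections between arbitrary topological spaces. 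The paper's detour has the modest side benefit of surfacing the irreducible\,$\Leftrightarrow$\,almost 1-to-1 link, which is relevant elsewhere in the discussion, but for this lemma your direct approach is cleaner.
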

\begin{proof}
If $\phi$ or $ \psi $ is not irreducible, then it is not almost 1-to-1 \cite[Theorem 10.2]{Wh} and  so $ \psi\circ\phi $ cannot be almost 1-to-1 either, a contradiction.
\end{proof}
\begin{lem}\cite[Lemma 1.2]{J1}\label{lem:open}
A code $ \phi:X\rightarrow Y $ between shift spaces is open if and only if for each
$  l\in\mathbb N$, there is $k\in\mathbb N  $ such that whenever $  x\in X$, $y\in Y$ and $ \phi(x)_{[-k,\,k]}=y_{[-k,\,k]} $, we
can pick $ \overline{x}\in X $ with $ \overline{x}_{[-l,\,l]}=x_{[-l,\,l]} $ and $ \phi(\overline{x})=y $.
\end{lem}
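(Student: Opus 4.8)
The plan is to read openness of $\phi$ through the standard neighborhood basis of a shift space and then translate the statement ``images of basic open sets are open'' into the block condition of the lemma. Recall that the topology on $X$ is generated by the cylinders $[u]_{[i,j]}$, and that for any point $x$ lying in such a cylinder the central cylinder $[x_{[-l,l]}]$ with $l\geq\max(|i|,|j|)$ is contained in it; hence the family $\{[x_{[-l,l]}]:x\in X,\ l\in\mathbb{N}\}$ is a basis for $X$, and $\phi$ is open if and only if $\phi([x_{[-l,l]}])$ is open in $Y$ for every $x$ and every $l$.

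For the ``if'' direction I would show directly that each $\phi([x_{[-l,l]}])$ is open. Pick any point of it, say $\phi(x')$ with $x'\in[x_{[-l,l]}]$. Applying the hypothesis to $x'$ and this $l$ produces a $k=k(l)$ (the same $k$ works for every such $x'$, since it depends only on $l$): for every $y\in Y$ with $\phi(x')_{[-k,k]}=y_{[-k,k]}$ there is $\overline{x}\in X$ with $\overline{x}_{[-l,l]}=x'_{[-l,l]}=x_{[-l,l]}$ and $\phi(\overline{x})=y$, i.e.\ $[\phi(x')_{[-k,k]}]\subseteq\phi([x_{[-l,l]}])$. Thus every point of $\phi([x_{[-l,l]}])$ is interior, so this set is open, and by the basis remark $\phi$ is open. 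This direction is essentially bookkeeping.

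For the ``only if'' direction, fix $l$. Since the alphabet is finite, $X$ is compact and each central cylinder $[w]$, $w\in\mathcal{B}_{2l+1}(X)$, is clopen and compact; therefore $\phi([w])$ is compact (continuity of $\phi$) and open (openness of $\phi$). A compact open subset of a shift space is a finite union of cylinders, so $\phi([w])$ is a union of central $(2k_w+1)$-cylinders for some $k_w\in\mathbb{N}$. As there are only finitely many words $w$ of length $2l+1$, put $k=\max_w k_w$. Then for any $x\in X$ and $y\in Y$ with $\phi(x)_{[-k,k]}=y_{[-k,k]}$, writing $w=x_{[-l,l]}$ we get $\phi(x)\in\phi([w])$, hence $[\phi(x)_{[-k,k]}]\subseteq\phi([w])$, hence $y\in\phi([w])$, which yields $\overline{x}\in[w]$ (so $\overline{x}_{[-l,l]}=x_{[-l,l]}$) with $\phi(\overline{x})=y$ --- exactly the asserted conclusion. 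The one genuine subtlety here, and the step I expect to be the main obstacle, is the uniformity of $k$: it rests on the finiteness of the alphabet together with the elementary fact that a compact open (i.e.\ clopen and compact) subset of a shift space is a finite union of cylinders, which is what lets each $\phi([w])$ have a well-defined ``cylinder radius'' $k_w$ over which one may take the maximum. Everything else is a routine passage between the topological and the combinatorial descriptions.
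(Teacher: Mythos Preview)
Your argument is correct in both directions. The ``if'' direction is indeed bookkeeping, and in the ``only if'' direction you have correctly identified the one nontrivial point: uniformity of $k$ in $x$, which you obtain by using compactness (finiteness of $\mathcal B_{2l+1}(X)$) together with the fact that a clopen set in a shift space is a finite union of cylinders.

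As for comparison: the paper does not supply its own proof of this lemma. It is quoted verbatim from \cite[Lemma~1.2]{J1} and used as a black box, so there is nothing in the present paper to compare your argument against. Your proof is the standard one and is essentially what appears in Jung's paper.
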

So if $  \phi$ is open, then for any $l\in\mathbb N  $ we can find $k\in\mathbb N  $ such that the image of
a central $ 2l+1 $ cylinder in $  X$ consists of central $ (2k+1) $ cylinders in $  Y$.
 We say that $  \phi$ has a \emph{uniform
lifting length} if for each $ l\in\mathbb N $, there exists $  k$ satisfying the above property such
that $ \sup_{l}|k-l|<\infty $.
 Let us formally state a definition for a semi-open code.
\begin{defn}
A code $ \phi:X\rightarrow Y $ between shift spaces is semi-open if  for each
$  l\in\mathbb N$, there is $k\in\mathbb N  $ such that the image of
a central $ 2l+1 $ cylinder in $  X$ contains  a central $ (2k+1) $ cylinder in $  Y$.
\end{defn}
Note that the definition of uniform
lifting length extends naturally to semi-open codes.
\begin{lem}\label{onto} 
Let $  X$ and $  Y$ be shift spaces with $  Y$ irreducible. If $ \phi:X\rightarrow Y $ is a
semi-open code, then it is onto and so a factor code.
\end{lem}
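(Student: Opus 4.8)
The plan is to show that the image $\phi(X)$ is all of $Y$ by exploiting the fact that a semi-open image must contain a central cylinder, and that an irreducible shift is covered by the shifts of such cylinders. First I would recall that $\phi(X)$ is automatically a shift space: it is shift-invariant because $\phi$ is a sliding-block code commuting with $\sigma$, and it is closed because $X$ is compact (recall shift spaces here are compact) and $\phi$ is continuous. So $Y':=\phi(X)\subseteq Y$ is a subshift, and it suffices to prove $Y'=Y$, equivalently $\mathcal B(Y')=\mathcal B(Y)$.

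Next I would use semi-openness applied to the largest cylinder, namely $l=0$ giving the whole space $X=[\,\cdot\,]$ — more carefully, apply the definition to $X$ itself (or to a single central cylinder $[a]$ for some symbol $a$ appearing in $X$): there is a $k\in\mathbb N$ such that $\phi(X)$ contains some central $(2k+1)$-cylinder $[w]_Y$ with $|w|=2k+1$. Thus $w\in\mathcal B(Y')$ and, crucially, every point of $Y$ with $w$ in the central coordinates lies in $Y'$. The key step is then to promote this single word $w$ to all of $\mathcal B(Y)$ using irreducibility of $Y$: given any $u\in\mathcal B(Y)$, by irreducibility there are words $p,q\in\mathcal B(Y)$ with $w p u q w\in\mathcal B(Y)$ (first connect $w$ to $u$, then $u$ to $w$). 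Choose any $y\in Y$ containing $wpuqw$ as a subword positioned so that one of its occurrences of $w$ sits in the central coordinates $[-k,k]$; then $y\in[w]_Y\subseteq Y'$, so $u$, being a subword of $y$, lies in $\mathcal B(Y')$. Hence $\mathcal B(Y)\subseteq\mathcal B(Y')$, giving $Y'=Y$.

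I expect the main (though still minor) obstacle to be the bookkeeping of shifting the word $y$ so that an occurrence of $w$ lands precisely on coordinates $[-k,k]$: one must note that $Y'$ is shift-invariant, so if $y\in Y$ has $w$ occurring anywhere, then some shift $\sigma^m(y)$ lies in $[w]_Y$, and $\sigma^m(y)\in Y'$ forces $y\in Y'$ as well since $Y'$ is $\sigma$-invariant; alternatively one argues directly at the level of words that every word of $Y$ extends to one of the form required. A second small point worth stating explicitly is why $\phi(X)$ is closed — this is exactly compactness of $X$ together with continuity of $\phi$ — since without it $Y'$ need not be a subshift and the language argument would not immediately close. Neither point is deep, so the proof should be short; the essential content is the combinatorial "spreading" of a single magic-like cylinder $[w]_Y$ across the whole irreducible shift $Y$.
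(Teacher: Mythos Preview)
Your proof is correct and follows essentially the same approach as the paper's: both observe that $\phi(X)$ is a closed $\sigma$-invariant subset of $Y$ containing a nonempty open set, and then use irreducibility of $Y$ to conclude $\phi(X)=Y$. The only difference is cosmetic --- you argue combinatorially at the level of words and languages, while the paper phrases the same step topologically, invoking that topological transitivity of $(Y,\sigma)$ gives $\overline{\bigcup_{n\in\mathbb Z}\sigma^n(U)}=Y$ for any nonempty open $U$.
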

\begin{proof}
Since $ \phi $ is semi-open, $ \phi(X) $ contains a non-empty open set $ U\subseteq Y $. Therefore,  since
$ (Y,\,\sigma) $ is topologically transitive, $ \overline{\cup_{n=-\infty}^{\infty}\sigma^{n}(U)}=Y $ \cite{W}. On the other hand, $ \phi(X) $ is a $  \sigma$-invariant subset of $  Y$. Thus $  \phi$ is onto.
\end{proof}
Our next results give situations where the factor code on covers are semi-open.
 In all of them, irreducibility is an important hypothesis.
 The following example shows that non-semi-openness may occur quite easily in reducible covers.
\begin{example}\label{not semi} 
Let $ X $ be a shift space presented by Figure~\ref{reducible} and $ Y $ the golden shift given by Figure~\ref{even}. 
Define $\Phi : {\mathcal B}_{1}(X) \rightarrow
\{0,1\}$ as,
$$
 \Phi(w)=\left\{
\begin{tabular}{ll}
$1$&$w=1,$ \\
$0$&$\rm{otherwise},$ \\
\end{tabular}
\right .
$$
and let $\phi : X \rightarrow Y$ be the
code induced by $\Phi$. Since $ [2]=2^{\infty} $, $ \phi([2])=0^{\infty} $ and this shows that $\phi$ is not semi-open.
Note that $\phi$ is a finite-to-one code.
\end{example}
\begin{figure}
\begin{center}
\hspace{-25mm}\begin{tikzpicture}[->,>=stealth',shorten >=1pt,auto,node distance=1.5cm,
                    thick,main node/.style={rectangle,draw,font=\sffamily\small\bfseries}]

\node[main node](3) {};

 \path[every node/.style={font=\sffamily\small}]
   
(3)edge [loop above] node {$2$} (3);
\end{tikzpicture}
\end{center}
\vspace{-13mm}
\begin{center}
\begin{tikzpicture}[->,>=stealth',shorten >=1pt,auto,node distance=1.5cm,
                    thick,main node/.style={rectangle,draw,font=\sffamily\small\bfseries}]

  \node[main node](1) {};
\node[main node] (2) [ right of=1] {};
\node[main node](3) {};

 \path[every node/.style={font=\sffamily\small}]
    (1)edge [bend left] node[right] {$\hspace{-4mm}\ ^{\ ^{1}}$} (2)
    edge [loop above] node {$0$} (1)
    (2)  edge [bend left] node[right] {$\hspace{-4mm}\ ^{\ ^{0}}$} (1);
\end{tikzpicture}
\end{center}
\begin{center}
\caption{The cover of $ X $.}\label{reducible} 
\end{center}
\end{figure}
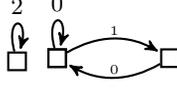
\begin{figure}
\begin{center}
\begin{tikzpicture}[->,>=stealth',shorten >=1pt,auto,node distance=1.5cm,
                    thick,main node/.style={rectangle,draw,font=\sffamily\small\bfseries}]

  \node[main node](1) {};
\node[main node] (2) [ right of=1] {};

 \path[every node/.style={font=\sffamily\small}]
    (1)edge [bend left] node[right] {$\hspace{-4mm}\ ^{\ ^{1}}$} (2)
    edge [loop above] node {$0$} (1)
    (2)  edge [bend left] node[right] {$\hspace{-4mm}\ ^{\ ^{0}}$} (1); 
\end{tikzpicture}
\end{center}
\begin{center}
\caption{The Fischer cover of the golden shift.}\label{even} 
\end{center}
\end{figure}
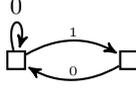

\begin{thm}\label{r-r magic} 
A coded system $X$ is synchronized if and only if there is a cover 
$\mathcal G=(G,\,\mathcal L)$ so that $\mathcal L_{\infty}$ is semi-open. In fact, any irreducible right-resolving cover of a 
synchronized system with a magic word is semi-open.
\end{thm}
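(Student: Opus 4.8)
The plan is to prove the two directions of the equivalence separately, and to obtain the second sentence (the ``in fact'' clause) as the engine driving the harder direction.

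First, suppose $\mathcal G = (G,\mathcal L)$ is a cover of $X$ with $\mathcal L_\infty$ semi-open. Then by Lemma \ref{onto} (applied with $Y = X$, which is irreducible since $X$ is coded) the map $\mathcal L_\infty$ is onto, so it is a factor code. Semi-openness gives that the image of some central cylinder $[u]$ in $X_G$ contains a central cylinder $[w]$ in $X$; I would argue that such a $w$ is then a synchronizing word for $X$. The point is that any biinfinite point of $X$ through $w$ lifts to a path in $G$ passing through a \emph{fixed} finite piece of the graph (the paths presenting $u$, all lying over the occurrence of $w$), and right/left concatenation in $X$ across $w$ can be realized by concatenating paths in $G$ through that common piece. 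Making this precise — in particular pinning down that all lifts of $w$ synchronize to the same vertices on both sides, using that $[u]$ maps \emph{into} the cylinder $[w]$ — is a short but slightly delicate argument; this is the first place care is needed.

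Conversely, suppose $X$ is synchronized. Then $X$ has an irreducible right-resolving cover with a magic word: take the (right) Fischer cover $\mathcal G = (G,\mathcal L)$ described in the excerpt, which is right-resolving and irreducible, and note that any synchronizing word is (essentially) a magic word for it. So it suffices to prove the ``in fact'' statement: if $\mathcal G=(G,\mathcal L)$ is an irreducible right-resolving cover of a synchronized $X$ with a magic word $\alpha$, then $\mathcal L_\infty$ is semi-open. Fix $l\in\mathbb N$ and a central cylinder $[e_{-l}\cdots e_l]$ in $X_G$, where $e_{-l}\cdots e_l$ is a path from vertex $I$ to vertex $J$. Using irreducibility of $G$, choose paths $p$ from the terminal vertex of $\alpha$ to $I$ and $q$ from $J$ back to the initial vertex of $\alpha$, and consider the word $w = \mathcal L(\alpha)\,\mathcal L(p)\,\mathcal L(e_{-l}\cdots e_l)\,\mathcal L(q)\,\mathcal L(\alpha)$. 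The key claim is that, after reading the leading $\mathcal L(\alpha)$, \emph{every} presentation of $w$ in $G$ is forced (the magic word pins down the vertex after the first $\alpha$, and then right-resolvingness forces the rest of the path uniquely), so every biinfinite point of $X$ whose central block is $w$ lifts to a path through $e_{-l}\cdots e_l$. Hence $\mathcal L_\infty$ maps the central cylinder $[e_{-l}\cdots e_l]$ onto a set containing the central cylinder $[w]$ (centering $w$ appropriately), and taking $k$ with $|w|=2k+1$ shows semi-openness. The main obstacle is the forcing claim: one must combine the magic-word property (which fixes the state at the end of the first $\alpha$) with right-resolvingness (which then determines all subsequent edges from their labels) and check that this genuinely forces the path to traverse the prescribed block $e_{-l}\cdots e_l$, and not merely some other path with the same label.

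Finally I would assemble the pieces: the forward direction is Lemma \ref{onto} plus the synchronizing-word extraction; the backward direction is the Fischer-cover observation plus the ``in fact'' argument; and Lemma \ref{circ} is available if one wants to transfer semi-openness between different covers related by a factor code, though for the stated theorem the direct construction above suffices.
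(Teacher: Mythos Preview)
Your argument for the ``in fact'' clause (irreducible right-resolving cover with magic word $\Rightarrow$ semi-open) is exactly the paper's: sandwich the given cylinder between copies of the magic word and use the magic property together with right-resolvingness to pin down the presenting path uniquely. For the converse (semi-open cover $\Rightarrow$ synchronized), however, you take a genuinely different route. The paper does \emph{not} extract a synchronizing word directly; it observes that $X_G$ is Polish, so $\mathcal L_\infty(X_G)$ is analytic and hence has the Baire property, writes $\mathcal L_\infty(X_G)=O\triangle P$ with $O$ open and $P$ first category, uses semi-openness to force $O\ne\emptyset$, concludes that $\mathcal L_\infty(X_G)$ is residual in $X$, and then invokes the Fiebig--Fiebig criterion \cite[Theorem~1.1]{FF} that a coded system with a cover of residual image is synchronized. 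Your path-splicing argument is more elementary and constructive: from $[w]\subseteq\mathcal L_\infty([u])$ you produce an explicit synchronizing word, with no descriptive set theory and no appeal to \cite{FF}. Two small corrections to your sketch: the invocation of Lemma~\ref{onto} is unnecessary (and its proof uses compactness of the domain, which may fail when $G$ is infinite); and you do not need \emph{all} lifts of $w$ to hit the same vertices --- only that each point of $[w]$ has \emph{some} lift in $[u]$, which is precisely the containment $[w]\subseteq\mathcal L_\infty([u])$. With that in hand, splicing two such lifts along the common edge-block $u$ immediately gives $uwv\in\mathcal B(X)$ whenever $uw,\,wv\in\mathcal B(X)$.
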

\begin{proof}
Let $X$ be synchronized with a magic word $\alpha$ and let $\mathcal G=(G,\,\mathcal L)$ be an irreducible right-resolving cover
for $X$. 
Suppose $ [\pi] =[e_{-l}\cdots e_{l}]\in\mathcal B(X_{G}) $   is a central $ 2l+1 $ cylinder in $X_G$ with $ \mathcal L(e_{i})=a_{i} $, $ -l\leq i\leq l $; thus
 $ \mathcal L_{\infty}([\pi])\subseteq [a_{-l}\cdots a_{l}] $.
Choose
 $\lambda\in\mathcal B(X_{G}) $ such that $ \mathcal L(\lambda)=\alpha $. 
Since $ X_{G} $ is irreducible, there are two paths $ \gamma $ and $ \xi $ in $ G $ such that $ \pi'=\lambda\xi\pi\gamma\lambda\in\mathcal B(X_{G}) $. 
Now set $ w=\mathcal{L}( \pi')=\alpha\mathcal L(\xi)a_{-l}\cdots a_{l}\mathcal L(\gamma)\alpha $ and note that
by the fact that $\alpha$ is magic and $\mathcal G$ is right-resolving,
 if
 $w=\mathcal{L}(\pi'')$, then $\pi''=\pi'$. So  $ [w]\subseteq\mathcal L_{\infty}([\pi'])  $ and we are done.

For the converse let $\mathcal G=(G,\,\mathcal L)$ be a cover for $X$ with $\mathcal L_{\infty}$ semi-open. Since 
$X_G$ is a Polish space, $ \mathcal L_{\infty}(X_G) $ is analytic and so there is an open set $O\subseteq X$ and a first category set $P\subseteq X$ so that $\mathcal L_{\infty}(X_G)=O\triangle P$. Therefore, 
$$\mathcal L_{\infty}(X_G)=\cup_{n\in\mathbb Z}\sigma^{n}(O\triangle P)\supseteq\cup_{n\in\mathbb Z}\sigma^{n}(O)- \cup_{n\in\mathbb Z}\sigma^{n}(P).$$
Since $\mathcal L_{\infty}$ is semi-open, $O$ is non-empty. This means that the first union on right is a non-empty open dense  set.
As a result, $\mathcal L_{\infty}(X_G)$ is residual and $X$ synchronized \cite[Theorem 1.1]{FF}.
\end{proof}

When $\mathcal G$ is the Fischer cover for a synchronized system, we will have a stronger conclusion.
\begin{thm}\label{Fischer} 
If $ X $ is a synchronized system with the Fischer cover 
$ \mathcal G=(G, \mathcal L) $, then $ \mathcal L_{\infty} $ is irreducible. 
\end{thm}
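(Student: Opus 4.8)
The plan is to show that the Fischer cover map $\mathcal L_\infty$ is irreducible by using the characterization that a factor code is irreducible if and only if it is almost 1-to-1 (invoked in the proof of Lemma \ref{lem:irr} via \cite[Theorem 10.2]{Wh}), and then proving that $\mathcal L_\infty$ is almost 1-to-1. Concretely, I would fix a doubly transitive point $y \in X$ and argue that it has a unique preimage in $X_G$; since doubly transitive points are dense in the irreducible system $X$, this gives a dense set of injectivity points, hence almost 1-to-1, hence irreducible. (One should be slightly careful: \cite[Theorem 10.2]{Wh} as used here presumably applies to factor codes on the relevant class of spaces; $X_G$ need not be compact, but it is Polish, and the synchronized/Fischer-cover setting from \cite{FF} is exactly where the almost 1-to-1 $\Leftrightarrow$ irreducible dictionary is available. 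I would cite \cite{FF} for whatever is needed here.)

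**The key step** is the uniqueness of the preimage of a doubly transitive point. Let $\alpha$ be a synchronizing word for $X$, so that $\omega_+(\alpha)$ is a vertex of the Fischer cover $G$, and recall the Fischer cover is right-resolving. Suppose $y \in X$ is doubly transitive; then $\alpha$ appears in $y$ infinitely often both to the left and to the right, say at positions accumulating at $-\infty$. Given any bi-infinite path $\xi \in X_G$ with $\mathcal L_\infty(\xi) = y$, I want to show $\xi$ is determined by $y$. The point is that because $\alpha$ is synchronizing, every occurrence of $\alpha$ in $y$ forces the path $\xi$ to pass through the vertex $\omega_+(\alpha)$ at the corresponding coordinate: any $X$-ray extending $\alpha$ to the left lands in the follower set $\omega_+(\cdot\alpha) = \omega_+(\alpha)$ by the synchronizing property, and the Krieger-graph construction then pins the vertex. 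Once $\xi$ is pinned at a vertex at some coordinate, right-resolvingness propagates this to the right (the labels $y_i, y_{i+1}, \dots$ determine the edges uniquely), and since occurrences of $\alpha$ recur to the left as well, every coordinate lies to the right of some pinned vertex; therefore $\xi$ is unique. Hence $\mathcal L_\infty^{-1}(y)$ is a singleton.

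**For the density half**, I would simply note that $X$ is irreducible (being synchronized) and every irreducible shift space has a dense set of doubly transitive points — this is standard and used freely in the excerpt — so the set of points with unique preimage is dense in $X$, giving that $\mathcal L_\infty$ is almost 1-to-1. Combined with the first paragraph this yields irreducibility of $\mathcal L_\infty$.

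**The main obstacle** I anticipate is the bookkeeping in the uniqueness argument: making precise that "an occurrence of the synchronizing word $\alpha$ in $y$ forces the preimage path through the vertex $\omega_+(\alpha)$." This requires unwinding the Krieger-graph definition — that a vertex is a follower set $\omega_+(x_-)$ and that the synchronizing property collapses all left $X$-rays ending in $\alpha$ to the same follower set $\omega_+(\alpha)$ — together with the observation that in the Fischer cover (an irreducible component of the Krieger graph) this collapse is consistent with the edges actually used by $\xi$. A secondary subtlety is non-compactness of $X_G$: I would either restrict attention to the locally-finite/countable-graph setting already fixed in the Background section, or cite \cite{FF} for the fact that the relevant almost-1-to-1 argument goes through for the Fischer cover of a synchronized system. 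Everything else (right-resolving $\Rightarrow$ forward determinism of the lift, density of doubly transitive points) is routine and can be stated without detailed computation.
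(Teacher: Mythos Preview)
Your proposal is correct and rests on the same decisive fact as the paper: doubly transitive points have a unique $\mathcal L_\infty$-preimage. The paper's proof is much more direct, however. It simply cites the degree-one property from \cite[Proposition 9.1.6]{LM} rather than reproving it, and it argues straight from the definition of irreducibility instead of passing through the almost 1-to-1 characterization: given a proper closed $F\subsetneq X_G$, the nonempty open complement contains a doubly transitive point $x$; since $x$ is the sole preimage of $\mathcal L_\infty(x)$, one gets $\mathcal L_\infty(x)\notin\mathcal L_\infty(F)$, so $F$ does not surject. Your hand-made uniqueness argument (a synchronizing word pins a vertex, right-resolvingness propagates to the right, occurrences of $\alpha$ recur to the left) is a genuine proof of the cited fact in the general synchronized setting, which is arguably more honest than invoking \cite{LM}; the cost is length.

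One point to tighten: you show that the set of $y\in X$ with a singleton fiber is dense in $X$ and conclude almost 1-to-1, but the paper's definition requires the injectivity set $\{\xi\in X_G:\mathcal L_\infty^{-1}(\mathcal L_\infty(\xi))=\{\xi\}\}$ to be dense in the \emph{domain} $X_G$. The fix is immediate---take doubly transitive points of $X_G$ (dense since $G$ is irreducible) and note their images are doubly transitive in $X$, exactly as the paper does---but as written your density sits on the wrong side of the map.
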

\begin{proof}
Let $ F $ be a  proper closed subset  of $X_G$.
Then,  $F'=X_{G} \setminus F $ is open and there is a doubly transitive point $ x\in F' $. But the degree of $ \mathcal L_{\infty} $ is one \cite[Proposition 9.1.6]{LM} and so $ \mathcal L_{\infty}(x)\not\in  \mathcal L_{\infty}(F)$ which  means that $\mathcal L_{\infty}(F)\neq X$.
\end{proof}
For sofics, when considering finite covers, a rather weaker hypothesis holds the same conclusion as Theorem \ref{r-r magic}.
\begin{thm}\label{finite cover} 
Suppose $ X $ is  sofic and 
$ \mathcal G=(G,\, \mathcal L) $  an irreducible finite cover. Then, $ \mathcal L_{\infty} $ is semi-open.
\end{thm}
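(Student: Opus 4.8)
The plan is to pass to the Fischer cover and use Lemma \ref{circ} together with Theorem \ref{r-r magic}. Since $\mathcal G=(G,\mathcal L)$ is an irreducible finite cover of the sofic shift $X$, the shift $X$ is an irreducible sofic shift. Every irreducible sofic shift is in particular a synchronized system, so Theorem \ref{r-r magic} (applied to the Fischer cover $\mathcal H=(H,\mathcal M)$ of $X$, which is an irreducible right-resolving presentation admitting a magic word by \cite[Theorem 3.3.18, Proposition 9.1.6]{LM}) gives that the Fischer cover map $\mathcal M_{\infty}:X_{H}\to X$ is semi-open. By Theorem \ref{Fischer} it is moreover irreducible.

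Next I would compare the arbitrary irreducible finite cover $X_{G}$ with the Fischer cover $X_{H}$. The Fischer cover is the minimal right-resolving presentation, and the standard argument (subset construction together with minimality, see \cite[\S3.3]{LM}) shows that there is a factor code $\theta:X_{G}\to X_{H}$ with $\mathcal M_{\infty}\circ\theta=\mathcal L_{\infty}$. Concretely, one first right-resolves $G$ (this does not destroy irreducibility or finiteness and only passes to a higher block presentation), then observes the resulting right-resolving presentation of the irreducible sofic shift $X$ factors onto its Fischer cover. Granting such a $\theta$, we are in the situation of Lemma \ref{circ}: we have surjections $\theta:X_{G}\to X_{H}$ and $\mathcal M_{\infty}:X_{H}\to X$ whose composition $\mathcal L_{\infty}$ we wish to show is semi-open --- but here the composition is the unknown, so I instead use the second half of Lemma \ref{circ} in the reverse direction. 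Actually the clean route is: $\mathcal L_{\infty}=\mathcal M_{\infty}\circ\theta$ is a composition of a semi-open map after an arbitrary factor code, which need not be semi-open in general, so I must be more careful.

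The better approach is to build the comparison the other way. Since $X_{G}$ is an irreducible finite cover, after passing to a right-resolving higher-block presentation $\mathcal G'$ of $X$ (still irreducible, finite) we get a right-resolving presentation; the Fischer cover $\mathcal H$ is a factor of $\mathcal G'$ via some $\theta:X_{\mathcal G'}\to X_{H}$ commuting with the covering maps. An irreducible right-resolving presentation of a sofic shift always admits a magic word (synchronizing word for $X$ lifts to a magic word by right-resolvingness and irreducibility, cf. the proof of Theorem \ref{r-r magic}), so by Theorem \ref{r-r magic} the map $\mathcal L'_{\infty}:X_{\mathcal G'}\to X$ is semi-open. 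Finally, $X_{\mathcal G'}$ and $X_{G}$ present the same shift $X$ and differ only by a conjugacy (higher block map) on the cover, so semi-openness of $\mathcal L'_{\infty}$ transfers to $\mathcal L_{\infty}$ directly.

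The main obstacle is the first reduction: verifying carefully that an irreducible finite cover, after the right-resolving (subset) construction, remains a finite irreducible cover of $X$ and admits a magic word, so that Theorem \ref{r-r magic} applies. The subset construction can a priori introduce reducibility, so one restricts to the irreducible component containing the image of a transitive point; checking that this component still covers all of $X$ (rather than a proper subshift) uses irreducibility of the original $G$ together with the fact that $X$ is sofic, hence has a finite presentation whose follower-set structure is controlled. Once that bookkeeping is done, the conclusion is immediate from Theorem \ref{r-r magic} and the conjugacy-invariance of semi-openness.
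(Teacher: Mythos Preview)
Your approach has a genuine gap at the transfer step. You assert that after right-resolving, ``$X_{\mathcal G'}$ and $X_G$ \ldots\ differ only by a conjugacy (higher block map) on the cover,'' but this is false in general. Passing to higher edge blocks on $G$ does \emph{not} make an arbitrary labeling right-resolving: if two distinct edges $e,e'$ out of the same vertex carry the same label, they still do after any higher-block recoding of $X_G$. The standard way to right-resolve an arbitrary finite presentation is the subset construction, and that produces an edge shift $X_{G'}$ which is typically not conjugate to $X_G$ (indeed $X_G$ may have strictly larger entropy than any right-resolving presentation of $X$). What one obtains instead is a factor relation with $\mathcal L_\infty$ expressed as a composition $\mathcal L'_\infty\circ\theta$ for some factor code $\theta$---and that is precisely the configuration you already correctly identified as insufficient, since a semi-open map precomposed with an arbitrary factor code need not be semi-open. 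Lemma~\ref{circ} goes in the wrong direction here, and there is no general factor code from the Fischer cover \emph{onto} $X_G$ that would let you run it the right way.

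The paper's proof takes a completely different, self-contained route: it argues directly by contradiction on the finite graph $G$. Assuming some cylinder $[u_0]$ has image with empty interior, one iteratively constructs paths $u_0,u_1,u_2,\ldots$ (using irreducibility of $G$ to concatenate) whose terminal vertices $I_0,I_1,I_2,\ldots$ have strictly increasing follower sets; each step exploits the failure of $[\mathcal L(u_n)]\subseteq\mathcal L_\infty([u_0])$ to locate a vertex with at least one additional outgoing label. This forces unbounded out-degree, contradicting finiteness of $G$. No passage to a right-resolving presentation, no magic word, and no appeal to Theorem~\ref{r-r magic} is needed; in fact Theorem~\ref{finite cover} is logically prior in the paper and is used to derive Corollary~\ref{new}.
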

\begin{proof}
Suppose $\mathcal L_{\infty}  $ is not semi-open. So there is a
 cylinder $ [u_0] $ in $ X_{G} $ such that $ \mathcal L_{\infty}([u_0]) $ does not have any central cylinder.
 Without loss of generality, assume that $ |u_0|=1 $. 
Then, there must exist another edge $ u'_0\in\mathcal E(G) $ such that $  \mathcal L(u'_0)= \mathcal L(u_0) $; otherwise, $  [\mathcal L(u_0)]\subseteq\mathcal L_{\infty}([u_0]) $ and this contradicts our assumption. 
But to rule out $  [\mathcal L(u_0)]\subseteq\mathcal L_{\infty}([u_0]) $ completely, one must have either $ F(\mathcal L(u_0))\quad\subsetneq F(\mathcal L(u'_0))$ or $ P(\mathcal L(u_0))\quad\subsetneq P(\mathcal L(u_0'))$.
Assume the former and suppose $ f\in F(u_0) $. Then, there must be an
 edge $   f'\in F(u_0')$ such that $  \mathcal L(f')= \mathcal L(f) $ and yet  another path
$   f''\in F(u_0')$ such that $ \mathcal L(f'')\not\in F(I_0) $ where $ I_{0}=t(u_0) $. 

Now choose any path $u_1=u_0w_0u'_0$, $ t(u_1)=t(u'_0)=I_1  $ and observe that since we cannot have $[\mathcal{L}(u_1)]\subseteq \mathcal L_{\infty}([u_0])$, there is another path $u'_1$
labeled as $u_1$  whose last edge is $u''_1$ with $ \mathcal L(u''_1)=\mathcal L(u'_0)=\mathcal L(u_0) $ such that $I_2=t(u''_1)$ has at least one path more than $I_1$. 
Construct $I_n$ inductively. Then, the outer edges of $I_n$ increases by $n$ which is absurd for a finite graph such as $G$.
\end{proof}
The following example shows that 
Theorem~\ref{finite cover}
 does not hold for an infinite irreducible cover.
\begin{example}\label{non semi2} 
Let $ \mathcal G=(G,\,\mathcal L) $ be the cover shown in Figure~\ref{nonsemi-open} and $ e\in\mathcal E(G) $ with $ i(e)=I_{1} $ and $ t(e)=I_{0} $. 
Note that $ X_{\mathcal G} $ is a full-shift on $ \{0,\,1,\,2\} $. 
One way to see this is that for $r\in\mathbb{N}$ and $ m_i\in\mathbb N_{0} $  and any $a_j\in\{0,\,1,\,2\}  $, 
we have a path labeled $ a_1^{m_1}a_2^{m_2}\cdots a_r^{m_r} $.
This implies that
 for any periodic point $ p^{\infty}\in\{0,\,1,\,2\}^{\mathbb Z}  $, 
we have a path $ \pi $ such that $ \mathcal L(\pi)=p $. The conclusion follows since the periodic points are dense in our space.

Now suppose $ \mathcal L_{\infty}([e]) $ contains a central $ 2n+1 $ cylinder $[b_{-n}\cdots b_{n}]$ with $ b_{0}=0 $. By the fact that our system is a full-shift, the point $x=\cdots2^{n+2}b_{-n}\cdots.0\cdots b_{n}\cdots\in[b_{-n}\cdots b_{n}] $; however $ x\not\in\mathcal L_{\infty}([e]) $.
 This is because, we have $ i(b_{-n})=I_{i},\,-n+1\leq i\leq n+1 $. For instance, if $ i=n+1 $, then $ b_{-1}=b_{-2}=\cdots b_{-n}=0 $ and if $ 0\leq i< n+1  $, then at least one $ b_{j} $ equals $ 2 $.
 Now let $A_{j}=\{i\in\mathbb N:\,2^{i}\in P(I_{j})\}$ for $j\in\mathbb Z$ where $ P(I_{j}) $ is the collection of labels of paths
terminating at $ I_{j} $. Then, $ A_{-1}=A_{0}=\emptyset $ and $\max A_{j}=j$ for 
$j\in\mathbb N$ and $\max A_{j}=-j-1$
for $j\in\{-n:\,n\geq2\}$. 
So if a path $ \pi $ is labeled $ 2^{n+2}b_{-n}\cdots.0 $, 
then $ t(\pi)\neq I_{0} $. This means that $\phi$ is not semi-open.
\end{example}

\begin{figure}
\begin{center}
\begin{tikzpicture}[->,>=stealth',shorten >=1pt,auto,node distance=1.5cm,
                    thick,main node/.style={rectangle,draw,font=\sffamily\small\bfseries}]

  \node[](1) {$\cdots$};
\node[main node] (2) [ right of=1] {\SMALL$I_{-3} $};
\node[main node] (3) [ right of=2] {\SMALL$I_{-2} $};
\node[main node] (4) [ right of=3] {\SMALL$I_{-1} $};
\node[main node] (5) [ right of=4] {\SMALL$I_{0} $};
\node[main node] (6) [ right of=5] {\SMALL$I_{1} $};
\node[main node] (7) [ right of=6] {\SMALL$I_{2} $};
\node[main node] (8) [ right of=7] {\SMALL$I_{3} $};
\node[] (9) [ right of=8] {$ \cdots $};

 \path[every node/.style={font=\sffamily\small}]
    (1)edge node [right] { $\hspace{-6mm}\ ^{\ ^{0}}$} (2)
    (2)  edge [bend left] node[right] {$\hspace{-4mm}\ ^{\ ^{1}}$} (1)
    edge [bend right] node[right] {$\hspace{-4mm}\ ^{\ ^{2}}$} (1)
    edge node [right] { $\hspace{-6mm}\ ^{\ ^{0}}$} (3)
    (3)  edge [bend left] node[right] {$\hspace{-4mm}\ ^{\ ^{1}}$} (2)
    edge [bend right] node[right] {$\hspace{-4mm}\ ^{\ ^{2}}$} (2)
    edge node [right] { $\hspace{-6mm}\ ^{\ ^{0}}$} (4)
    (4)  edge [bend left] node[right] {$\hspace{-4mm}\ ^{\ ^{1}}$} (3)
    edge [bend right] node[right] {$\hspace{-4mm}\ ^{\ ^{2}}$} (3)
    edge node [right] { $\hspace{-6mm}\ ^{\ ^{0}}$} (5)
    (5)  edge [bend left] node[right] {$\hspace{-4mm}\ ^{\ ^{1}}$} (4)
    edge [bend left] node[right] {$\hspace{-4mm}\ ^{\ ^{2}}$} (6)
    (6)  edge [bend left] node[right] {$\hspace{-4mm}\ ^{\ ^{2}}$} (7)
    edge node [right] { $\hspace{-6mm}\ ^{\ ^{0}}$} (5)
    (7)  edge [bend left] node[right] {$\hspace{-4mm}\ ^{\ ^{2}}$} (8)
    edge node [right] { $\hspace{-6mm}\ ^{\ ^{0}}$} (6)
    (8) edge [bend left] node[right] {$\hspace{-4mm}\ ^{\ ^{2}}$} (9)
    edge node [right] { $\hspace{-6mm}\ ^{\ ^{0}}$} (7)
(9) edge node [right] { $\hspace{-6mm}\ ^{\ ^{0}}$} (8) ; 
\end{tikzpicture}
\end{center}
\begin{center}
\caption{A cover of the full shift on $\{0,\,1,\,2\}$ which is not semi-open.}\label{nonsemi-open} 
\end{center}
\end{figure}
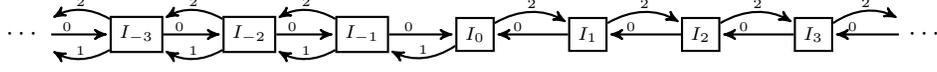

\begin{cor}\label{new} 
Suppose $  X $ is an irreducible sofic  and let $ \phi:X\rightarrow Y $ be a factor code. Then, $ \phi $ is semi-open.
\end{cor}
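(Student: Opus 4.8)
The plan is to deduce this from Theorem~\ref{r-r magic} together with Lemma~\ref{circ}, using the Fischer cover of $X$ as an intermediate object. First I would recall that an irreducible sofic shift $X$ has a Fischer cover $\mathcal G=(G,\mathcal L)$, which is an irreducible right-resolving presentation with finitely many vertices; by Theorem~\ref{r-r magic} (or, since the cover is finite, by Theorem~\ref{finite cover}) the cover map $\mathcal L_{\infty}\colon X_G\to X$ is semi-open. Here one should note that the Fischer cover of an irreducible sofic shift does possess a magic word (equivalently, the synchronizing words of $X$ are exactly the labels of paths that are magic), so the hypotheses of Theorem~\ref{r-r magic} genuinely apply.

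Next I would form the composition $\phi\circ\mathcal L_{\infty}\colon X_G\to Y$. The target $Y$ is irreducible (being a factor of the irreducible shift $X$), and $X_G$ is an irreducible edge shift, so $\phi\circ\mathcal L_{\infty}$ is a factor code between irreducible shift spaces. The key observation is that $\phi\circ\mathcal L_{\infty}$ is semi-open: it is the composition of the semi-open map $\mathcal L_{\infty}$ with $\phi$. Here a small point needs care — Lemma~\ref{circ} runs the other way (it lets one conclude semi-openness of a \emph{factor} of a semi-open composition), so instead I would argue directly that the composition of a semi-open map with any continuous surjection that is a code is semi-open: if $[u]$ is a central cylinder in $X_G$, then $\mathcal L_{\infty}([u])$ contains a central cylinder $[v]$ in $X$, and since $\phi$ is a code (sliding block, hence it maps cylinders to sets containing central cylinders when restricted appropriately)... actually the cleanest route is: $\mathcal L_{\infty}$ being semi-open means $\mathcal L_{\infty}(X_G)$ contains an open set, but we want more. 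Let me restate: for a central $2l+1$ cylinder $[u]$ in $X_G$, $\mathcal L_{\infty}([u])\supseteq [v]$ for some central cylinder $[v]$ in $X$; then $\phi([v])$ is a cylinder set in $Y$ and, $\phi$ being a block code, $\phi([v])$ contains a central cylinder of $Y$ (as the image of a cylinder under a block code is a finite union of cylinders, hence has nonempty interior and by shifting contains a central one). Thus $\phi(\mathcal L_{\infty}([u]))\supseteq\phi([v])$ contains a central cylinder of $Y$, so $\phi\circ\mathcal L_{\infty}$ is semi-open.

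Now apply Lemma~\ref{circ} with the roles $\phi_{\mathrm{Lemma}}=\mathcal L_{\infty}$, $\psi_{\mathrm{Lemma}}=\phi$, and $\psi\circ\phi=\phi\circ\mathcal L_{\infty}$ semi-open: the lemma yields that $\psi_{\mathrm{Lemma}}=\phi$ is semi-open, which is exactly the conclusion. So the proof is essentially two lines once the bookkeeping is set up: $\mathcal L_{\infty}$ semi-open (Theorem~\ref{r-r magic}), hence $\phi\circ\mathcal L_{\infty}$ semi-open, hence $\phi$ semi-open by Lemma~\ref{circ}.

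I expect the main obstacle to be the claim that the composition of a semi-open code with a block code is semi-open — this is the place where one must be careful, because semi-openness is not automatically preserved under composition in general topological settings, and the argument relies specifically on the fact that $\phi$ is a sliding block code so that images of cylinders are finite unions of cylinders (equivalently $\phi$ is an open map onto its image in the appropriate sense, or at least maps cylinders to sets with nonempty interior). An alternative that sidesteps this is to observe that $\mathcal L_{\infty}$, being a right-resolving cover map with a magic word, is in fact \emph{open} when restricted suitably — but I would rather just verify the cylinder-image claim directly, since $\phi([v])=\bigcup_{i}[w_i]$ for finitely many words $w_i$ determined by $\Phi$ on subwords of words in $X$ compatible with $v$, and this union plainly contains a central cylinder after an appropriate shift. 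Everything else is a direct invocation of results already established in the excerpt.
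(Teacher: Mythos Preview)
Your overall plan---pass to the Fischer cover $\mathcal G=(G,\mathcal L)$, show that $\phi\circ\mathcal L_\infty$ is semi-open, then apply Lemma~\ref{circ}---is exactly the paper's. The difference, and the gap, is in how you obtain semi-openness of the composition.

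You argue: $\mathcal L_\infty$ is semi-open, so $\mathcal L_\infty([u])\supseteq[v]$ for some cylinder $[v]$ in $X$, and then ``the image of a cylinder under a block code is a finite union of cylinders, hence has nonempty interior.'' This last assertion is false and, worse, circular: the statement that $\phi([v])$ has nonempty interior for every cylinder $[v]$ is precisely the semi-openness of $\phi$ that you are trying to establish. If block codes always sent cylinders to finite unions of cylinders, every factor code between shift spaces would be semi-open, contradicting the explicit examples in the paper (e.g.\ the construction following Theorem~\ref{s to s} of a non-semi-open code between synchronized systems). Even for the cover map of the even shift, the image $\mathcal L_\infty([e])$ of a single edge cylinder is not a finite union of cylinders, though it does contain one.

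The paper sidesteps this entirely: it observes that $\phi\circ\mathcal L_\infty$ is itself (after the standard recoding to a $1$-block map) the cover map of an irreducible \emph{finite} labeled graph presenting $Y$, and then invokes Theorem~\ref{finite cover} directly on that cover. So Theorem~\ref{finite cover} is applied to the composition, not to $\mathcal L_\infty$ alone; this is the step you are missing. Once $\phi\circ\mathcal L_\infty$ is known to be semi-open by Theorem~\ref{finite cover}, Lemma~\ref{circ} finishes the proof exactly as you intended.
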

\begin{proof}
Let $ \mathcal G=(G, \mathcal L) $ be the Fischer cover of $ X $. Then, $ \phi\circ\mathcal L _{\infty} $ is an irreducible
finite cover for $ Y $ and by Theorem \ref{finite cover}, it is semi-open. Now the proof is a consequence of  Lemma~\ref{circ}.
\end{proof}

\begin{thm}\label{transitive sys} 
Let $T:X\rightarrow X$ and $S:Y\rightarrow Y$ be transitive homeomorphisms and $\phi:X\rightarrow Y$ a factor map. 
Suppose there is an open set $U_0$ such that for any open set $V\subseteq U_0$, int$ (\phi(V))\neq\emptyset $. Then, $\phi$
is semi-open.
\end{thm}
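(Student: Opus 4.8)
The plan is to verify semi-openness directly from the definition: given an arbitrary nonempty open set $W\subseteq X$, I must exhibit a nonempty open subset of $\phi(W)$. The idea is to transport $W$ into the distinguished set $U_{0}$ by a power of $T$, apply the hypothesis there, and then transport the resulting open set back into $\phi(W)$ by the matching power of $S$, exploiting that a factor map intertwines the two homeomorphisms.

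First I would note that $U_{0}$ is automatically nonempty: were $U_{0}=\emptyset$, the only open $V\subseteq U_{0}$ would be $V=\emptyset$, and then $\mathrm{int}(\phi(V))=\emptyset$, violating the hypothesis. Now fix a nonempty open $W\subseteq X$. Since $T$ is transitive, there is $n\in\mathbb Z$ with $T^{n}(U_{0})\cap W\neq\emptyset$ (for instance because $\bigcup_{n\in\mathbb Z}T^{n}(U_{0})$ is open, $T$-invariant and hence dense, exactly as in the proof of Lemma \ref{onto}). Set $V:=T^{-n}(W)\cap U_{0}$, a nonempty open subset of $U_{0}$, so by hypothesis $\mathrm{int}(\phi(V))\neq\emptyset$.

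Because $\phi$ is a factor map we have $\phi\circ T^{n}=S^{n}\circ\phi$, hence
$$\phi\bigl(T^{n}(V)\bigr)=S^{n}\bigl(\phi(V)\bigr).$$
The homeomorphism $S^{n}$ is an open map, so $S^{n}\bigl(\mathrm{int}(\phi(V))\bigr)$ is a nonempty open set contained in $S^{n}(\phi(V))=\phi\bigl(T^{n}(V)\bigr)$. Since $T^{n}(V)=W\cap T^{n}(U_{0})\subseteq W$, this open set also lies in $\phi(W)$, so $\mathrm{int}(\phi(W))\neq\emptyset$. As $W$ was arbitrary, $\phi$ is semi-open.

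I do not anticipate a real obstacle: the argument is a short transport-and-invariance computation. The only points deserving attention are that the hypothesis forces $U_{0}\neq\emptyset$ so the transitivity step has content, and that one must use the relation $\phi\circ T=S\circ\phi$ together with the openness of the homeomorphism $S^{n}$ to carry the interior produced inside $\phi(V)$ back up into $\phi(W)$.
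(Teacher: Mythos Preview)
Your proof is correct and follows essentially the same approach as the paper: intersect a shifted copy of the arbitrary open set with $U_{0}$, apply the hypothesis there, and push the resulting open set forward by $S^{n}$ using $\phi\circ T^{n}=S^{n}\circ\phi$ and the openness of the homeomorphism $S^{n}$. Your added remark that the hypothesis forces $U_{0}\neq\emptyset$ is a small clarification the paper leaves implicit.
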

\begin{proof}
Let $U$ be an arbitrary open set. 
Since $ T $ is transitive, there is an integer $ n $ such that $ U_{0}\cap T^{-n}(U) $ is a non-empty open set. By hypothesis, there is open set $ V\subseteq \phi(U_{0}\cap T^{-n}(U))  $ and for that
$$ S^{n}(V)\subseteq S^{n}\phi(U_{0}\cap T^{-n}(U))=\phi T^{n}(U_{0}\cap T^{-n}(U))=\phi (T^{n}(U_{0})\cap U)\subseteq\phi(U). $$
and since $ S $ is homeomorphism, $ S^{n}(V) $ is open.
\end{proof}
An immediate consequence of this theorem is that if $ X $ and $ Y $ are subshifts with $ T $ and $ S $ shift maps, then image of any cylinder under $ \phi $ has empty interior if and only if there is just one cylinder in $ X $ with empty interior under $ \phi $. Example \ref{non semi2} is a good example to see this fact.

\section{Open   vs semi-open codes}\label{vs}

 The next two theorems say that 
\cite[Proposition 3.3]{J1} and \cite[Proposition 3.4]{J1} that were stated for open codes  are actually valid for 
 semi-open codes as well. The main ingredient in the proof is to have an open set lying in the image of any open set which is supplied by any semi-open map.

\begin{thm}\label{SFT finite to one}
Let $  X$ be a shift of finite type, $  Y$ an irreducible sofic shift and
$\phi: X\rightarrow Y$ a finite-to-one semi-open code. Then, $  X$ is non-wandering and all components are
maximal.
\end{thm}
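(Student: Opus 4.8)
The plan is to exploit the finite-to-one hypothesis to control entropy, and the semi-openness to pull open (hence ``large'') sets back into $X$ through $\phi$. First I would recall that a finite-to-one factor code preserves entropy (\cite[Theorem 8.1.16]{LM}), so $h(X)=h(Y)$, and since $Y$ is irreducible, $h(Y)=h(Y)>0$ is realized on all of $Y$. Decompose $X$ into its irreducible components $X_1,\dots,X_m$ together with the wandering part; since $X$ is an SFT, there are finitely many components and the non-wandering set $\Omega(X)$ is the union of the $X_i$. The entropy satisfies $h(X)=\max_i h(X_i)$, so at least one component, say $X_1$, is maximal.

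Next I would argue that every component must be maximal, and that no point is wandering. The key is that $\phi$ is semi-open: given any cylinder $[u]$ in $X$ meeting a component $X_i$ (or meeting the wandering part), $\phi([u])$ contains a central cylinder $[w]$ in $Y$. Because $Y$ is irreducible, the orbit of $[w]$ is dense, and by a standard counting argument the number of $n$-blocks of $Y$ that occur inside $\phi([u])$ grows at entropy rate $h(Y)$. Pulling this back through the finite-to-one map $\phi$ (each block of $Y$ has a bounded number of $\phi$-preimage blocks, the bound coming from finite-to-oneness, cf. the magic-word/diamond analysis in \cite{LM}), the set of $n$-blocks of $X$ appearing in $[u]$ also grows at rate $h(Y)=h(X)$. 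If $[u]$ is chosen to lie in a single component $X_i$, or to witness a wandering point, this forces $h(X_i)=h(X)$ in the former case, and in the latter case produces a contradiction with $u$ being outside $\Omega(X)$ since a set of $n$-blocks growing at full entropy cannot be supported on the (entropy-zero, since it carries no periodic orbits in an SFT) wandering part — more precisely, the blocks forced into $[u]$ must for large $n$ pass through some component, and transitivity of that component together with the cylinder structure of $[u]$ forces $[u]$ itself to meet the non-wandering set, contradicting wanderingness.

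Concretely the steps are: (1) invoke entropy-preservation for finite-to-one codes to get $h(X)=h(Y)$; (2) use that $X$ is SFT to write $\Omega(X)=\bigsqcup X_i$ with finitely many components and $h(X)=\max h(X_i)$; (3) for an arbitrary cylinder $[u]$ of $X$, use semi-openness to find a central cylinder $[w]\subseteq\phi([u])$, then use irreducibility and the entropy of $Y$ to lower-bound the block-growth of $Y$ inside $[w]$; (4) transport this lower bound back to $X$ via the bounded-to-one preimage count, concluding that the blocks occurring in $[u]$ have growth rate $\ge h(Y)=h(X)$; (5) deduce that if $[u]$ lies in $X_i$ then $h(X_i)=h(X)$, so every component is maximal; (6) deduce that $[u]$ cannot be disjoint from $\Omega(X)$ — otherwise the full-entropy family of blocks inside $[u]$ would have to avoid every $X_i$, impossible since in an SFT every block of positive-entropy-worth extends into a non-wandering configuration — hence $X$ is non-wandering.

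The main obstacle I anticipate is step (4)–(6): making the ``blocks inside $[u]$ grow at full entropy, hence $[u]$ must meet a maximal component / the non-wandering set'' rigorous. One must be careful that semi-openness only gives \emph{some} central cylinder $[w]$ in the image, of unknown radius, and that transporting block counts backwards through $\phi$ requires the finite-to-one bound on the number of preimage blocks of a given block (which holds uniformly for finite-to-one codes between SFT/sofic shifts), plus an argument that a positive-entropy collection of $X$-blocks through a fixed cylinder cannot live entirely in the wandering part of an SFT. I expect this to mirror the argument of \cite[Proposition 3.3]{J1}, the only change being that ``image of a cylinder is a cylinder'' is weakened to ``image of a cylinder contains a cylinder,'' which is exactly what is needed for the entropy lower bound and nothing more.
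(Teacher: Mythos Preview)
Your high-level strategy matches the paper exactly: the paper gives no detailed argument, merely remarking that the proof of \cite[Proposition~3.3]{J1} carries over verbatim because the only use of openness there is to place an open set inside the image of each cylinder, and semi-openness supplies precisely this. Your closing paragraph says the same thing.

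The gap is in your proposed implementation of step~(6). The entropy/block-counting route does \emph{not} establish non-wandering: in a reducible SFT a wandering cylinder can have right-extensions growing at the full rate $h(X)$. For instance, take two disjoint copies of the full $2$-shift joined by a single transient edge $e$; the cylinder $[e]$ is wandering, yet the blocks $ev$ with $v$ ranging over words of the second copy already grow at rate $\log 2=h(X)$. So the implication ``a full-entropy family of blocks through $[u]$ forces $[u]$ to meet $\Omega(X)$'' is simply false, and your sentence ``every block of positive-entropy-worth extends into a non-wandering configuration'' does not rescue it (those blocks $ev$ are all transient). The argument that actually works is the periodic-point lift: $\phi([u])$ contains a cylinder $[w]$; since $Y$ is irreducible sofic, $[w]$ contains a periodic point $p$; since $\phi$ is finite-to-one, $\phi^{-1}(p)$ is a finite $\sigma^{\mathrm{per}(p)}$-invariant set and hence consists of periodic points; thus $[u]$ contains a periodic point, contradicting wandering. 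Once $X$ is non-wandering the components $X_i$ are clopen, so one may choose $[u]\subseteq X_i$; then $\phi(X_i)\supseteq[w]$, hence $\phi(X_i)=Y$ by irreducibility of $Y$, hence $h(X_i)\ge h(Y)=h(X)$. Note that your step~(5) already tacitly needs $[u]\subseteq X_i$, which requires the components to be open and therefore requires non-wandering first --- so the order of (5) and (6) must be reversed as well.
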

Example \ref{not semi} shows that semi-openness is required in the hypothesis of the above theorem.
\begin{thm}\label{right closing}
Let $ \phi:X\rightarrow Y $ be a right-closing semi-open code from a shift space
$  X$ to an irreducible shift of finite type $  Y$. Then, $  X$ is a non-wandering shift of finite
type.
\end{thm}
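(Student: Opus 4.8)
The plan is to follow the structure of \cite[Proposition 3.4]{J1} while replacing the use of openness by the weaker semi-openness, exactly as the authors announce before the statement. First I would recall the standard fact (see \cite{LM}, Ch.~8) that a shift space is a non-wandering shift of finite type precisely when it is conjugate to an edge shift on a graph whose every vertex lies on a cycle; equivalently, one has to produce a memory/anticipation bound for $X$ and show $X$ is non-wandering. So the argument splits into two parts: (a) $X$ is non-wandering, and (b) $X$ has finite type.

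For part (a), the key is that $Y$ is an irreducible SFT, hence has dense doubly transitive points, and $\phi$ is semi-open, so by Lemma~\ref{onto} it is onto. A semi-open factor code onto an irreducible shift forces $X$ to have a ``large'' piece: the image of each central cylinder of $X$ contains a central cylinder of $Y$. I would argue that the union of the non-wandering components of $X$ already surjects onto $Y$, using that the non-wandering set $\Omega(X)$ is closed and $\sigma$-invariant and that $\phi(\Omega(X))$ is then a closed invariant subset of $Y$ containing a cylinder (by semi-openness applied to an open subset of $\Omega(X)$), hence all of $Y$ by transitivity of $Y$. Then any wandering point of $X$ would have to map into $Y$ in a way already covered by $\Omega(X)$, and combining this with right-closingness (which bounds the number of preimages of left-asymptotic rays) one rules out wandering points; alternatively one invokes Theorem~\ref{SFT finite to one}-style reasoning once finite type is in hand. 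Actually the cleanest route: first establish (b) to conclude $X$ is an SFT, and then a right-closing factor code from an SFT onto an irreducible SFT which is semi-open must have $X$ non-wandering with all components maximal — but that uses finiteness of fibers, so instead I would get non-wandering directly from the semi-open + right-closing combination as above.

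For part (b), the heart of the matter: I would show $X$ has bounded memory. Since $Y$ is an SFT, fix $M$ so that $Y$ is $M$-step. Since $\phi$ is right-closing, there is a right-closing retract: there exists $N$ such that if $x,x'\in X$ agree on $(-\infty,0]$ and $\phi(x)=\phi(x')$ then... (standard: right-closing codes have a coding radius). Now use semi-openness to get, for the window $l$, a $k$ with $\phi([x_{[-l,l]}])\supseteq [y_{[-k,k]}]$ for the appropriate $y=\phi(x)$. The plan is to combine: given two points $u,v\in X$ with $u_{[-n,0]}=v_{[-n,0]}$ for $n$ large (larger than $k+N+M$), I want to splice $u_{(-\infty,0]}$ with $v_{[1,\infty)}$ and stay in $X$. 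Map everything down to $Y$, splice there (legal since $Y$ is $M$-step), then lift the spliced $Y$-point back using semi-openness to fill a central block and right-closingness to control the left tail, producing a point of $X$ witnessing that $X$ is $n$-step. This shows $X$ is of finite type, and together with (a) finishes the proof.

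The hard part will be the lifting/splicing step: semi-openness only guarantees that \emph{some} central cylinder of $Y$ lies in the image of a given central cylinder of $X$, with no a priori control on how $k$ depends on $l$ (no uniform lifting length is assumed), and it does not directly say the spliced $Y$-point itself is hit. So I expect to need an auxiliary argument — pass to the image $\phi(X)=Y$ and work with fibers over doubly transitive points, or use that the set of $Y$-points liftable with prescribed left tail is residual (via the analytic-set/Baire-category argument already used in the proof of Theorem~\ref{r-r magic}), then use right-closingness to pin down the lift and a shift to move the guaranteed central cylinder to the desired location. Reconciling ``$\phi$ semi-open'' (an existence statement about \emph{some} cylinder in the image) with the need to lift a \emph{specific} spliced point is the crux, and I would handle it by density/category rather than by a direct combinatorial bound.
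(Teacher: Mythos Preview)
The paper gives no proof at all for this theorem: it simply asserts, in the sentence preceding the statement, that Jung's proof of \cite[Proposition~3.4]{J1} goes through verbatim because ``the main ingredient in the proof is to have an open set lying in the image of any open set,'' which semi-openness already supplies. So the paper's ``proof'' is: read Jung's argument and observe that every invocation of openness is of this weak form.

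Your proposal announces the same plan in its first sentence, but what follows is not a verification of Jung's proof --- it is an attempt to reconstruct an argument from scratch, and that reconstruction has real gaps. In part~(a) you write that $\phi(\Omega(X))$ contains a cylinder ``by semi-openness applied to an open subset of $\Omega(X)$''; but semi-openness is a property of $\phi$ on open subsets of $X$, not of the restriction $\phi|_{\Omega(X)}$, so this step does not go through as stated, and the surrounding sentences offer several alternative routes without committing to any of them. In part~(b) your splicing scheme runs into exactly the obstacle you yourself flag: semi-openness guarantees only that \emph{some} central cylinder of $Y$ lies in $\phi([u])$, with no control over which one and no uniformity in $k=k(l)$, so you cannot directly lift the specific spliced $Y$-point, and the promised ``density/category'' rescue is left as a slogan rather than an argument. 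These are precisely the places where one needs to see what Jung actually does; the paper's point is that when you look, the openness hypothesis is never used for more than ``$\phi(U)$ has non-empty interior,'' and then the rest of Jung's combinatorics (which is not the splicing argument you sketch) carries the proof. You should carry out that line-by-line check rather than improvise a parallel argument.
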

By Theorem~\ref{r-r magic}, unlike open codes \cite[Corollary 4.3]{J1}, a right-closing 
semi-open extension of an irreducible strictly almost Markov (resp. non-AFT sofic) shift 
is not necessarily strictly almost Markov (resp. non-AFT sofic).

We have some similarities between open and semi-open codes. For instance, a coded system $X$ is SFT if and only if there is a cover 
$\mathcal G=(G,\,\mathcal L)$ so that $\mathcal L_{\infty}$ is open which can be deduced from \cite[Proposition 3.1.6]{LM} and \cite[Proposition 2.3]{J1}. A similar result for semi-open codes is Theorem~\ref{r-r magic}. Also, SFT is invariant under open codes as synchronized systems under semi-open codes:
\begin{thm}\label{s to s}
Suppose $X$ is a synchronized system and $\phi:X\rightarrow Y$ a semi-open code. Then, $Y$ is  synchronized.
\end{thm}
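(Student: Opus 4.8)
The plan is to produce a synchronizing word for $Y$ from one for $X$, using the semi-openness of $\phi$ to transfer the combinatorial information across the code. Let $w$ be a synchronizing word for $X$, and after passing to a higher block presentation we may assume $\phi$ is a $1$-block code; we may also enlarge $w$ so that it is as long as we like. First I would observe that, since $X$ is irreducible (being synchronized) and $\phi$ is semi-open, Lemma~\ref{onto} shows $\phi$ is a factor code, and $Y$ is irreducible (a factor of an irreducible system). So it suffices to exhibit one synchronizing word in $\mathcal B(Y)$.

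The key step is to choose the candidate synchronizing word for $Y$ correctly. Fix a long synchronizing word $w$ for $X$ occurring as a central block, say $w=x_{[-l,l]}$ for a suitable point $x$, and consider the cylinder $[w]$ in $X$. By semi-openness there is $k$ and a central $(2k+1)$-cylinder $[v]\subseteq\phi([w])$, where $v=\phi(x)_{[-k,k]}$. The claim is that $v$ (or rather its central portion, after trimming) is synchronizing for $Y$. To see this, suppose $uv,\,vu'\in\mathcal B(Y)$. Then there are points $y,y'\in Y$ with $y$ containing $uv$ in the appropriate place and $y'$ containing $vu'$; since $[v]\subseteq\phi([w])$ and $Y$ is irreducible, I can use the definition of semi-open together with a suitable shift to locate preimages $\tilde x,\tilde x'\in X$ of $y,y'$ whose central coordinates both equal $w$ — here is where the argument needs care: one must arrange that the preimage of $y$ passing through $[v]$ can be taken with central block exactly $w$, which follows because $\phi([w])\supseteq[v]$ means \emph{every} point of $[v]$ has a $\phi$-preimage in $[w]$. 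Then $\tilde x$ reads $(\text{stuff})w$ to the left of coordinate $l$ and $\tilde x'$ reads $w(\text{stuff})$ to the right of coordinate $-l$; because $w$ is synchronizing in $X$, the block $\tilde x_{(-\infty,\,l]}\,\tilde x'_{[-l,\,\infty)}$ (glued along $w$) is in $\mathcal B(X)$, hence a point of $X$, and applying $\phi$ gives a point of $Y$ witnessing $uvu'\in\mathcal B(Y)$.

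I would then clean this up: strictly, one gets that $v$ is synchronizing once $w$ is long enough relative to the memory/anticipation involved in gluing, and the only subtlety is bookkeeping about how far left/right of the central block the letters $u,u'$ sit versus where $\phi$'s window reaches. The main obstacle is precisely this alignment issue — ensuring that when we pull $y$ (containing $uv$) and $y'$ (containing $vu'$) back through $\phi$ into $[w]$, the overlapping regions with $w$ are honored on \emph{both} sides so that the synchronizing property of $w$ applies to glue the two preimages into a single legitimate $X$-point. Once that is handled, the image of the glued point is the desired $Y$-point, $v$ is synchronizing, and $Y$ is synchronized. An alternative, possibly slicker route would invoke Theorem~\ref{r-r magic}: take a cover $\mathcal G$ of $X$ with $\mathcal L_\infty$ semi-open; then $\phi\circ\mathcal L_\infty$ is a semi-open cover of $Y$ (composition of semi-open maps is semi-open, or use that its image contains an open set), so by the converse direction of Theorem~\ref{r-r magic} applied to $Y$, $Y$ is synchronized — this avoids the hands-on gluing entirely, and I would present this as the primary proof if the hypotheses of Theorem~\ref{r-r magic}'s converse direction (namely that $Y$ is a coded system with a semi-open cover) are readily checked, which they are since factors of coded systems through factor codes are coded.
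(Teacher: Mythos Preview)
Your proposal is correct, and your ``alternative, possibly slicker route'' is exactly the paper's proof: the paper takes the Fischer cover $\mathcal G=(G,\mathcal L)$ of $X$, notes that $\phi\circ\mathcal L_\infty$ is a semi-open cover map for $Y$, and invokes Theorem~\ref{r-r magic} to conclude that $Y$ is synchronized --- a one-line argument.

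Your primary route, the direct gluing argument, is a genuinely different and more elementary approach. It produces an explicit synchronizing word $v$ for $Y$ (any $v$ with $[v]\subseteq\phi([w])$, $w$ synchronizing in $X$) and avoids the descriptive-set-theoretic machinery hidden inside the converse of Theorem~\ref{r-r magic} (analytic sets, Baire category, and the Fiebig--Fiebig residuality criterion). The trade-off is that your route carries the alignment bookkeeping you flag --- pulling $uv$ and $vu'$ back to preimages in $[w]$ and splicing along $w$ --- whereas the paper's route absorbs all of that into the black box of Theorem~\ref{r-r magic}. One minor slip: semi-openness only guarantees that \emph{some} central cylinder $[v]$ lies in $\phi([w])$, not that $v=\phi(x)_{[-k,k]}$ for a pre-selected $x$; but this does not affect the argument, since what you actually use is only that every point of $[v]$ has a $\phi$-preimage in $[w]$.
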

\begin{proof}
The Fischer cover of $X$ will be a cover for $Y$ whose cover map is semi-open; this fact and the conclusion is supplied by Theorem~\ref{r-r magic}.
\end{proof}

Note that  any coded system $Y$ can be a factor  of some synchronized systems \cite[Proposition 4.1]{BH2}. 
In particular, if $Y$ is not synchronized, then by the above theorem, the
associated code is not semi-open. 

%
%
%
%
\begin{example}
Unlike codes between irreducible sofic shifts which are all semi-opens (Corollary \ref{new}), there can be a non-semi-open code between two synchronized systems.
For an example, let $Y'$ be a non-synchronized system on $\mathcal{A'}=\{0,\,1,\ldots ,\,k-1\}$ generated by a prefix code $C'_{Y'}=\{ua',\,w_1,\ldots\}$,
 $a'\in\mathcal{A'}$ as in \cite[Proposition 4.1]{BH2}. This means if $v\in C'_{Y'}$, then no other element of $C'_{Y'}$ starts with $v$
 and we know that any coded system has a prefix code \cite{BH2}. 
Let $\phi':X'\to Y'$ be 
the one-block code between the synchronized system $X'$ generated by $C'_{X'}=\{uz,\,w_1,\ldots\}$, 
$z\not\in \mathcal{A'}\cup\{k\}=\{0,\,1,\ldots ,k\}$  and $\phi'$ induced by the block map $ \Phi' $ defined as
\begin{equation}
 \Phi'(x)_i=\left\{
\begin{tabular}{ll}
$x_i$&$x_i\in\mathcal A',$ \\
$a'$&$x_i=z.$ \\
\end{tabular}
\right .
\end{equation} 
By Theorem \ref{s to s}, $\phi'$ is not semi-open.

Now let $X$ and $Y$ be the synchronized systems 
generated by $C_X=C'_{X'}\cup \{k\}$ and $C_Y=C'_{Y'}\cup \{k\}$ respectively. 
Here, $k$ is a synchronizing word for both systems.
Also, define $\phi:X\to Y$ as $\phi'$ when restricted to $X'$ and in other places by $\Phi(k)=k$.  
Then, since $\phi'$ is not semi-open, $\phi$ is not semi-open either.
\end{example}
Jung in \cite{J1} shows that when
$  \phi$ is a finite-to-one code from a shift of finite type $  X$ to an
irreducible sofic shift $  Y$, then $  \phi$ is open if and only if it is constant-to-one. For semi-open codes we have
\begin{thm}\label{semi a.e.} 
Let $  \phi$ be a finite-to-one code from a shift of finite type $  X$ to an
irreducible sofic shift $  Y$. 
 If $  \phi$ is semi-open, then it will be constant-to-one a.e..
\end{thm}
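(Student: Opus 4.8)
The plan is to combine the structural result of Theorem~\ref{SFT finite to one} with Jung's open-vs-constant-to-one dichotomy applied on a residual set. First I would invoke Theorem~\ref{SFT finite to one}: since $\phi$ is a finite-to-one semi-open code from the SFT $X$ onto the irreducible sofic shift $Y$, the hypotheses give that $X$ is non-wandering and every irreducible component of $X$ is maximal, i.e.\ $h(X_0)=h(Y)$ for each component $X_0$. Restricting $\phi$ to a single component $X_0$ yields a finite-to-one factor code $\phi|_{X_0}:X_0\to Y$ between irreducible sofic shifts of equal entropy, which therefore has a well-defined degree $d_0$, and $\phi|_{X_0}$ is constant-to-one on the doubly transitive points of $Y$: every doubly transitive $y\in Y$ has exactly $d_0$ preimages in $X_0$ under $\phi|_{X_0}$ (this is the standard degree theory of finite-to-one codes on irreducible sofic shifts, \cite[Section~9.1]{LM}).

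Next I would assemble the global count. Let $X_1,\dots,X_r$ be the (finitely many, since $X$ is an SFT) irreducible components of $X$; because $X$ is non-wandering these components cover $X$. For a doubly transitive point $y\in Y$, any preimage $x\in\phi^{-1}(y)$ has a doubly transitive image, hence $x$ must lie in some component $X_i$ (a point whose orbit is not eventually trapped in a single component maps to a non-doubly-transitive point, so I would spell out that a preimage of a doubly transitive point is forward- and backward-transitive and thus sits inside one component). Therefore $\phi^{-1}(y)=\bigcup_{i=1}^r \bigl(\phi^{-1}(y)\cap X_i\bigr)$, and by the previous paragraph $\#\bigl(\phi^{-1}(y)\cap X_i\bigr)=d_i$ for every doubly transitive $y$ that lies in $\phi(X_i)$; but each $\phi(X_i)$ is a subshift of $Y$ with $h(\phi(X_i))=h(X_i)=h(Y)$, and since $Y$ is irreducible this forces $\phi(X_i)=Y$, so in fact every doubly transitive $y$ is hit exactly $d_i$ times from each $X_i$. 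Hence $\#\phi^{-1}(y)=\sum_{i=1}^r d_i$, a constant, for all doubly transitive $y$. The set $D$ of doubly transitive points of $Y$ is residual, so $\phi$ is constant-to-one off a meager set, which is the assertion ``constant-to-one a.e.''.

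The main obstacle I anticipate is the bookkeeping around components: making precise that preimages of doubly transitive points land in single components, that distinct components all surject onto $Y$ (which is where maximality and irreducibility of $Y$ are used), and that the component decomposition of an SFT is finite so the sum $\sum d_i$ is genuinely a finite constant. Once those points are handled the ``a.e.'' conclusion is immediate because doubly transitive points form a residual (indeed full-measure, for any fully supported ergodic measure) set. A secondary subtlety is that two components might be glued along wandering-free transitions, so a single doubly transitive point could receive contributions from several components simultaneously; the argument above already accounts for this by summing, so it is not a real difficulty, only something to state carefully. I would also remark that Example~\ref{not semi} shows the semi-openness hypothesis cannot be dropped, paralleling the remark after Theorem~\ref{SFT finite to one}, and note that unlike the open case one genuinely only gets ``a.e.'' rather than everywhere, since points with non-transitive image may have an exceptional number of preimages.
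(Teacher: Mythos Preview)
Your proposal is correct and follows essentially the same route as the paper's proof: invoke Theorem~\ref{SFT finite to one} to get $X$ non-wandering with all components maximal, observe that the restriction of $\phi$ to each irreducible component is a finite-to-one factor code onto $Y$ and hence has a degree by \cite[\S9.1]{LM}, and conclude that $\phi$ is constant-to-one a.e. The paper compresses the last step into a single sentence, whereas you spell out explicitly that each $\phi(X_i)=Y$ and that the fiber over a doubly transitive $y$ decomposes as a disjoint union over the components with total cardinality $\sum_i d_i$; this is exactly the content hidden in the paper's phrase ``so $\phi$ is constant-to-one a.e.\ as well.'' One of your anticipated obstacles is in fact a non-issue: once $X$ is known to be a non-wandering SFT it is a \emph{disjoint} union of its irreducible components, so every point (not just preimages of doubly transitive points) lies in a unique $X_i$, and there are no ``wandering-free transitions'' gluing components together.
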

\begin{proof}
By Theorem \ref{SFT finite to one}, $  X$ is non-wandering and all components are maximal. Since each component is SFT and $ \phi $ is finite-to-one, the restriction of $  \phi$ to each component is constant-to-one a.e. \cite[Theorem 9.1.11]{LM}. So $ \phi $ is constant-to-one a.e. as well.
\end{proof}
\begin{rem}
Example \ref{not semi} shows that the converse of Theorem \ref{semi a.e.} does not hold necessarily. 
However, when
$  X$ is an irreducible shift of finite type and $ \phi: X\rightarrow Y $ a finite-to-one factor code, then, $ \phi $ is semi-open and constant-to-one a.e. (Corollary \ref{new} and \cite[Theorem 9.1.11]{LM}). 
\end{rem}


Let $\phi: X\rightarrow Y$ be an open code between shift spaces. If $  \phi$ is
bi-closing and $  Y$ irreducible, then $  \phi$ is constant-to-one \cite[Corollary 2.8]{J1}.
The following example shows that this result  is not necessarily true  for semi-open codes.
\begin{example}\label{nonopen} 
Let $  Y$ be the even shift with its Fischer cover $\mathcal G=(G,\,\mathcal L) $ in Figure \ref{even}. Note that 
$  \mathcal L_{\infty}$ is bi-resolving and so $  Y$ is an AFT.
 By Theorem \ref{r-r magic}, $  \mathcal L_{\infty}$ is semi-open. 
But $|\mathcal L^{-1}_{\infty}(y)|=1  $ for all $ y\neq 0^\infty $ and equals 2 for $ y=0^{\infty} $.
So $  \mathcal L_{\infty}$ is not constant-to-one. 
\end{example}
Let $  X$, $  Y$ and $  Z$ be shift spaces and $\phi_1:X\rightarrow Z  $, $\phi_2:Y\rightarrow Z  $ the codes. Then,
the \emph{fiber product} of $(\phi_{1},\,\phi_{2})  $ is the triple $ (\Sigma,\,\psi_{1},\,\psi_{2}) $ where
$$\Sigma=\{(x,y)\in X\times Y:\,\phi_{1}(x)=\phi_{2}(y)\}$$
and  $\psi_1:\Sigma\rightarrow X  $ is defined by  $\psi_{1}(x,y)=x $; similarly for $  \psi_{2}$. Let $ \phi_1 $ be onto. Then, if $\psi_1$ is open, then so is $\phi_2$ \cite[Lemma 2.4]{J1}. We do not know this result for semi-openness. However, we have the following. 
\begin{thm}\label{fiber} 
Let $ (\Sigma;\,\phi_1,\,\phi_2) $ be the fiber product of $\phi_1:X\rightarrow Z  $ and $\phi_2:Y\rightarrow Z  $.
If $ \psi_1 $ and $\phi_1$ are semi-open and $ \psi_2 $ is onto, then $ \psi_2 $ and $\phi_2$ are semi-open.
\end{thm}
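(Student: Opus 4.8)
The plan is to prove the two conclusions separately, in both cases reducing everything to the single fact that a semi-open map carries every nonempty open set onto a set with nonempty interior, and then chasing open sets around the commuting square $\phi_1\circ\psi_1=\phi_2\circ\psi_2$. Throughout I will move freely between the topological formulation of semi-openness and the cylinder formulation of the Definition above: since the alphabets are finite, for a fixed $l$ there are only finitely many central $2l+1$ cylinders, so once each individual such cylinder has an image containing \emph{some} central cylinder, a single $k$ works uniformly for all of them. Recall also that $[\pi]=\Sigma\cap([a]\times[b])$ for a central $2l+1$ cylinder $[\pi]$ of $\Sigma$, where $[a]\subseteq X$ and $[b]\subseteq Y$ are central $2l+1$ cylinders, because $\Sigma$ is a subshift over the product alphabet $\mathcal A_X\times\mathcal A_Y$.

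For $\phi_2$ I would argue directly. Let $V\subseteq Y$ be nonempty open. Then $\psi_2^{-1}(V)$ is open and, because $\psi_2$ is onto, nonempty; semi-openness of $\psi_1$ produces a nonempty open $U\subseteq X$ with $U\subseteq\psi_1(\psi_2^{-1}(V))$, and semi-openness of $\phi_1$ produces a nonempty open $O\subseteq Z$ with $O\subseteq\phi_1(U)$. Since $O\subseteq\phi_1(U)\subseteq\phi_1(\psi_1(\psi_2^{-1}(V)))=\phi_2(\psi_2(\psi_2^{-1}(V)))=\phi_2(V)$ — the last equality using surjectivity of $\psi_2$ once more — the set $\phi_2(V)$ has nonempty interior, so $\phi_2$ is semi-open. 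This is the only place the hypothesis that $\psi_2$ is onto enters.

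For $\psi_2$ I would use cylinders. Fix $l$ and a nonempty central $2l+1$ cylinder $[\pi]=\Sigma\cap([a]\times[b])$ in $\Sigma$. Semi-openness of $\psi_1$ gives a nonempty central cylinder $[a']$ with $[a']\subseteq\psi_1([\pi])\subseteq[a]$. The key step is that this forces $\phi_1([a'])\subseteq\phi_2([b])$: every $x\in[a']$ completes to a pair $(x,y)\in[\pi]\subseteq\Sigma$ with $y\in[b]$, so $\phi_1(x)=\phi_2(y)\in\phi_2([b])$. Applying semi-openness of $\phi_1$ to $[a']$ now gives a nonempty central cylinder $[o]\subseteq\phi_1([a'])\subseteq\phi_2([b])$. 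Then $\phi_2^{-1}([o])\cap[b]$ is open and nonempty, and it lies in $\psi_2([\pi])$: if $y\in\phi_2^{-1}([o])\cap[b]$, then $\phi_2(y)\in[o]\subseteq\phi_1([a'])\subseteq\phi_1([a])$, so some $x\in[a]$ has $\phi_1(x)=\phi_2(y)$, whence $(x,y)\in[\pi]$ and $\psi_2(x,y)=y$. Thus $\psi_2([\pi])$ has nonempty interior, and by the finiteness remark $\psi_2$ is semi-open.

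I expect the $\psi_2$-part to be the only delicate point, concentrated in the two containments $\phi_1([a'])\subseteq\phi_2([b])$ and $\phi_2^{-1}([o])\cap[b]\subseteq\psi_2([\pi])$. The subtlety is that a first coordinate $x\in[a']$ may complete to many different pairs in $[\pi]$, so one cannot directly pin down the second coordinate; the way around it is to use only the \emph{existence} of a completion for the forward inclusion, and for the backward inclusion to reconstruct a legitimate first coordinate lying in $[a]$ out of the relation $\phi_1(x)=\phi_2(y)$ together with $\phi_2(y)\in[o]\subseteq\phi_1([a'])$. Everything else is the routine translation between semi-openness and images of cylinders.
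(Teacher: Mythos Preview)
Your proof is correct and follows essentially the same route as the paper's: for $\phi_2$ you unpack what the paper obtains by citing Lemma~\ref{circ}, and for $\psi_2$ both arguments pass from a cylinder $[a']\subseteq\psi_1([\pi])$ to a cylinder $[o]\subseteq\phi_1([a'])$ and then verify that preimages of $[o]$ under $\phi_2$ inside $[b]$ land in $\psi_2([\pi])$. The only cosmetic difference is that the paper names an explicit central cylinder $[\bar y_{-q-m}\cdots\bar y_{q+m}]$ inside your open set $\phi_2^{-1}([o])\cap[b]$ using the coding length $m$ of $\phi_2$, whereas you simply observe that this nonempty open set already witnesses semi-openness.
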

\begin{proof}
If $ \psi_1 $ and $\phi_1$ are semi-open, then Lemma \ref{circ} 
implies that $\phi_2$ is semi-open. To see that $\psi_2$ is semi-open  let $  C$ be the central $ 2l+1 $ cylinder in $  \Sigma$
and let $  m$ be the coding length of $\phi_2 $.
 Choose $ p\geq l $ so that $\psi_{1}(C)  $ contains  a central $ 2p+1 $ cylinder and let $ x\in[x_{-p}\cdots x_{p}]\subseteq\psi_{1}(C)  $.
 Also, let $ q\geq l $ so that $\phi_{1}([x_{-p}\cdots x_{p}])  $ contains  a central $ 2q+1 $ cylinder and let 
 $ z\in[z_{-q}\cdots z_{q}]\subseteq\phi_{1}([x_{-p}\cdots x_{p}]) $. Then, $ z=\phi_{1}(\overline{x}) $ 
where $ \overline{x}\in[x_{-p}\cdots x_{p}] $. 
So there is $ \overline{y}\in Y $ such that $ (\overline{x},\,\overline{y})\in C $. 

Now we claim that $  [\overline{y}_{-q-m}\cdots\overline{y}_{q+m}]\subseteq \psi_{2}(C)$. 
For $ y'_{[-q-m,\,q+m]}=\overline{y}_{[-q-m,\,q+m]} $,
 we have $ \phi_{2}(y')_{[-q,\,q]}=\phi_{2}(\overline{y})_{[-q,\,q]} $ 
which means that $\phi_{2}(y')\in[z_{-q}\cdots z_{q}]  $. So there is $ x'\in[x_{-p}\cdots x_{p}] $ such that $ \phi_{1}(x')=\phi_{2}(y') $ and this in turn means that $ (x',\,y')\in \Sigma $. 
But $ y'_{[-l,\,l]}=\overline{y}_{[-l,\,l]} $ and so in fact
$(x',\,y')\in C$.
  Thus  $\psi_{2}(C) $ contains $  [\overline{y}_{-q-m}\cdots\overline{y}_{q+m}]$ and the claim is established.
\end{proof}
\subsection{Lifting the semi-open code between synchronized systems  to a code between their Fischer covers} 
We usually read most of the properties of a synchronized system from its Fischer cover, so lifting the codes between systems to codes between their respective Fischer   covers could be helpful.
\begin{lem}\label{thm:lift}
Let $ X_i $, $ i=1,\,2 $ be a synchronized system with the Fischer cover $ \mathcal G_i=(G_i,\,\mathcal L_i) $. Then, a code $f:X_1\rightarrow X_2$ lifts to the Fischer covers. That is, there is a code
 $F:X_{G_1}\rightarrow X_{G_2}$ such that ${\mathcal {L}_2}_{\infty}\circ F=f\circ {\mathcal{ L}_1}_{\infty}$.
\end{lem}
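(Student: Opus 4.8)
The plan is to realize the lift $F$ directly on the level of infinite rays and then verify that it is given by a block map. First I would recall the structure of the Fischer cover $\mathcal G_i = (G_i,\mathcal L_i)$ as the irreducible component of the Krieger graph of $X_i$ through the vertex $\omega_+(\alpha_i)$, where $\alpha_i$ is a synchronizing word. The key point is that a vertex of $G_i$ is a follower set $\omega_+(x_-)$ of a left ray, an edge records the one-symbol extension, and for a synchronizing word $\alpha_i$ the vertex $\omega_+(\alpha_i)$ is reached (within the component) by \emph{every} presentation of any left ray $x_-\alpha_i$. Given $\xi \in X_{G_1}$, its label $x = {\mathcal L_1}_\infty(\xi) \in X_1$ is mapped by $f$ to $y = f(x) \in X_2$; I want to build a canonical presentation $F(\xi) = \eta \in X_{G_2}$ of $y$. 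Because $f$ is a sliding block code, say with window $[-m,n]$, the left ray $y_{(-\infty,j]}$ is determined by $x_{(-\infty,j+n]}$, hence the follower set $\omega_+(y_{(-\infty,j]})$ is a function of the left ray $x_{(-\infty,j+n]}$, i.e. essentially of the state of $\xi$ a bounded distance to the right. This suggests defining $\eta_j$ via the vertices $\omega_+(y_{(-\infty,j-1]})$ and $\omega_+(y_{(-\infty,j]})$ labeled $y_j$; one must check this really is an edge of the Fischer \emph{component} of $G_2$, which is where synchronization is used.

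The main steps, in order: (1) Fix synchronizing words $\alpha_i$ for $X_i$ and fix the coding window $[-m,n]$ of $f$. (2) For $x \in X_1$ with $y = f(x)$, define $\eta = F(\xi)$ on any presentation $\xi$ of $x$ by letting the vertex between coordinates $j-1$ and $j$ be the Krieger vertex $\omega_+\big(y_{(-\infty,j-1]}\big)$ and the edge be the one labeled $y_j$ from $\omega_+\big(y_{(-\infty,j-1]}\big)$ to $\omega_+\big(y_{(-\infty,j]}\big)$; this is well defined and independent of $\xi$, and by construction ${\mathcal L_2}_\infty(\eta) = y = f(x)$, so ${\mathcal L_2}_\infty \circ F = f \circ {\mathcal L_1}_\infty$ once $F$ is shown to land in $X_{G_2}$. (3) Show $\eta$ actually lies in the Fischer component and not merely in the Krieger graph: since $x$ (hence $y$) need not be doubly transitive, pick any occurrence of $\alpha_2$ in $y$ — which exists after shifting, or, more carefully, approximate $x$ by points whose images do contain $\alpha_2$ and use that $X_{G_2}$ is closed — and observe that the vertex at the end of that occurrence is $\omega_+(\text{prefix}\cdot\alpha_2)$, which lies in the Fischer component; irreducibility of that component then forces the whole bi-infinite path $\eta$ into it. (4) Verify that $F$ is continuous and shift-commuting, and in fact a sliding block code: the vertex $\omega_+\big(y_{(-\infty,j-1]}\big)$ depends on $y_{(-\infty,j-1]}$, hence on $x_{(-\infty,j-1+n]}$, hence on $\xi_{(-\infty,j-1+n]}$ — a priori only a "one-sided" block map — but composing left rays with their follower-set data and using that $\xi$ presents a point of the \emph{synchronized} system $X_1$ with $\mathcal G_1$ right-resolving, the relevant vertex is already pinned down by a bounded window of $\xi$, giving a genuine two-sided block map; this uses that in the Fischer cover each left ray is eventually presented through a unique vertex path, a consequence of right-resolvingness and the presence of synchronizing/magic words.

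I expect the main obstacle to be step (3)–(4): ensuring the constructed path $\eta$ stays inside the irreducible Fischer component of $G_2$ (as opposed to wandering in transient parts of the Krieger graph) when $x$, and therefore $y$, is not doubly transitive, and simultaneously confirming that the assignment $\xi \mapsto \eta$ has a \emph{finite} memory/anticipation rather than merely being a limit of finite-window maps. The resolution is to exploit that a synchronizing word of $X_2$ occurs cofinally — or to first establish the lift on the dense set of doubly transitive points, where everything is transparent, and then extend by continuity and closedness of $X_{G_2}$, checking uniform continuity to conclude the extension is again a sliding block code. The remaining verifications — well-definedness of the edge labels, commutation ${\mathcal L_2}_\infty \circ F = f\circ {\mathcal L_1}_\infty$, and $\sigma$-equivariance — are routine bookkeeping with follower sets.
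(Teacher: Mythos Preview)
Your direct construction in step~(2) does not do what you need, and the ``fix'' in step~(3) does not repair it. For a general $y\in X_2$ the path $j\mapsto \omega_+\big(y_{(-\infty,j]}\big)$ lives in the \emph{Krieger} graph, not necessarily in its Fischer component $G_2$. Concretely, take $X_2$ to be the even shift and $y=0^\infty$: the vertex $\omega_+(\cdots000)$ is a transient Krieger vertex with an outgoing edge into the Fischer component but no incoming edge from it, so your $\eta$ sits entirely outside $X_{G_2}$. Your remark that ``a synchronizing word of $X_2$ occurs cofinally'' is exactly what fails for such $y$; and even when $\alpha_2$ does occur once, irreducibility of the Fischer component together with right-resolvingness only pins the path to the component from that occurrence \emph{forward}, not to the left. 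The alternative you mention---approximate by doubly transitive points and pass to a limit---does work, but note that the limiting path is \emph{not} the Krieger follower-set path you wrote down (in the even-shift example the limit lands at a Fischer vertex, not at $\omega_+(\cdots000)$), so this is not a patch to your construction but a different construction altogether. That alternative is precisely what the paper does: define $F=\mathcal L_{2,\infty}^{-1}\circ f\circ \mathcal L_{1,\infty}$ on doubly transitive points (where degree one makes the inverse meaningful), and then use a magic/synchronizing word for $X_2$ together with right-resolvingness of $\mathcal G_2$ to show that central blocks of $F(x_n)$ stabilize, so the limit exists and is continuous.

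A second issue is your step~(4). For non-sofic synchronized $X_i$ the graphs $G_i$ are infinite, so $X_{G_i}$ is a shift over a countably infinite alphabet and the Curtis--Hedlund--Lyndon conclusion ``uniformly continuous $\Rightarrow$ finite-window block map'' is unavailable. Your assertion that the Krieger vertex $\omega_+\big(y_{(-\infty,j-1]}\big)$ is ``pinned down by a bounded window of $\xi$'' is exactly the statement that $X_2$ has a finite right-resolving presentation, i.e.\ is sofic. The paper accordingly only proves continuity (and shift-commutation) of $F$, not a finite coding window; that is all the subsequent Theorem~\ref{liftt} needs.
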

\begin{proof}
If $t\in X_1$ is doubly transitive, then $f(t)$ is doubly transitive in $X_2$ and so 
by the fact that the degree of $ {\mathcal L_i}_\infty $ is one \cite[Proposition 9.1.6]{LM},
 $ F={\mathcal {L}_2}_{\infty}^{-1}\circ f\circ {\mathcal {L}_1}_{\infty} $ makes sense on doubly transitive points. 
Pick $ x\in X_{G_1} $  and choose a sequence $ \{x_n\}_{n\in\mathbb N} $ of doubly transitive points converging to $x$. 
We will show that $\lim_{n\rightarrow\infty}F(x_n) $ exists.
First note that $ F(x_n) $ is a doubly transitive point for all $ n\in\mathbb N $.
Let $u=\mathcal{L}_2(\pi)$ be a synchronizing word for $X_2$ and $w$ any word  such that $uwu\in\mathcal{B}(X_2)$. 
If  $ \pi $ terminates at a vertex $ I $, then since $\mathcal L_{2_{\infty}}  $ is right-resolving, there is a unique path in $G_2$ with initial vertex $ I $  representing $w$.
Also, any  word occurs in a doubly transitive point infinitely often to the left and to the right; therefore, arbitrary central paths of $F(x_n)$ and $F(x_m)$ are the same for sufficiently large $m$ and $n$ and since we are on paths,  $\lim_{n\rightarrow\infty}F(x_n) $ exists. 
Set $ F(x):=\lim_{n\rightarrow\infty}F(x_n) $.

Now we claim that $ F $ is continuous. Let $ \{z_n \}_{n} $ be a sequence in $ X_{G_{1}} $ such that $ z_{n}\rightarrow z $. It is sufficient to show that $  F(z_n)\rightarrow F(z) $. 

For all $ n $, suppose $ \{w^{(n)}_m \}_{m} $ is a sequence of doubly transitive points in $ X_{G_{1}} $ such that $ w^{(n)}_m\rightarrow z_{n} $. Then, we have
$$\lim_{(m,\,n)\rightarrow\infty}w^{(n)}_m=z.$$
So by definition, $ F(z)=\lim_{(m,\,n)\rightarrow\infty}F(w^{(n)}_m) $. On the other hand, for all $ n $, $ F(z_n)=\lim_{m\rightarrow\infty}F(w^{(n)}_m) $ and this proves the claim and we are done.
\end{proof}
\begin{thm}\label{liftt} 
By the hypothesis of the above lemma,
\begin{enumerate}
\item
$f$ is irreducible if and only if $F$ is.
\item
If $F$ is semi-open, then $f$ is semi-open. When $ G_1$ and $G_2$ are locally-finite, the converse is also true.
\end{enumerate}
\end{thm}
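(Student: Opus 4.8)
The starting point is the commuting relation $ {\mathcal L_2}_{\infty}\circ F=f\circ {\mathcal L_1}_{\infty} $ of Lemma~\ref{thm:lift}. First I would record three properties of the two Fischer cover maps $ {\mathcal L_i}_{\infty} $, $ i=1,2 $: each is irreducible (Theorem~\ref{Fischer}); each is semi-open, since the Fischer cover is an irreducible right-resolving cover for which a synchronizing word of $ X_i $ serves as a magic word, so Theorem~\ref{r-r magic} applies; and each is therefore onto by Lemma~\ref{onto}. Abbreviate $ g:={\mathcal L_2}_{\infty}\circ F=f\circ {\mathcal L_1}_{\infty}\colon X_{G_1}\to X_2 $ for the common composite.

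For part~(1) the plan is to show that ``$ f $ irreducible'', ``$ g $ irreducible'' and ``$ F $ irreducible'' are pairwise equivalent. Two implications are immediate: if $ g $ is irreducible then so are $ f $ and $ F $, either straight from the definition (a closed $ A $ with $ F(A)=F(X_{G_1}) $ gives $ g(A)=g(X_{G_1}) $, and a closed $ B $ with $ f(B)=f(X_1) $ gives $ g({\mathcal L_1}_{\infty}^{-1}(B))=g(X_{G_1}) $ because $ {\mathcal L_1}_{\infty} $ is onto) or via Lemma~\ref{lem:irr}. The substance is the two reverse implications, that is, that a composition of irreducible maps is irreducible here; I would obtain these by passing to doubly transitive points. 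Since $ X_1,X_2 $ are compact, $ f $ is irreducible iff it is almost 1-to-1, meaning its set of injectivity points (those $ x $ with $ f^{-1}(f(x))=\{x\} $) is dense (\cite[Theorem~10.2]{Wh}); and since each $ {\mathcal L_i}_{\infty} $ has degree one (\cite[Proposition~9.1.6]{LM}) it restricts to a bijection between the residual sets of doubly transitive points of $ X_{G_i} $ and of $ X_i $, and preimages of doubly transitive points are doubly transitive. Then a doubly transitive $ \xi\in X_{G_1} $ whose image $ {\mathcal L_1}_{\infty}(\xi) $ is an injectivity point of $ f $ is the unique $ g $-preimage of $ g(\xi) $, and semi-openness of $ {\mathcal L_1}_{\infty} $ forces the set of such $ \xi $ to meet every cylinder of $ X_{G_1} $; hence $ f $ almost 1-to-1 implies $ g $ almost 1-to-1, so $ g $ irreducible. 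Symmetrically, on doubly transitive points $ F $ is, via the degree-one maps $ {\mathcal L_i}_{\infty} $, a copy of the restriction of $ f $ to the doubly transitive points of $ X_1 $ and carries the doubly transitive points of $ X_{G_1} $ onto those of $ X_{G_2} $, which lets one pass irreducibility of $ F $ back to $ f $.

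For part~(2): if $ F $ is semi-open then $ g={\mathcal L_2}_{\infty}\circ F $ is a composition of semi-open maps, hence semi-open, so writing $ g=f\circ {\mathcal L_1}_{\infty} $ with $ {\mathcal L_1}_{\infty} $ onto, Lemma~\ref{circ} gives that $ f $ is semi-open. For the converse I assume $ G_1,G_2 $ locally finite and $ f $ semi-open; then $ g=f\circ {\mathcal L_1}_{\infty} $ is again a composition of semi-open maps, so $ {\mathcal L_2}_{\infty}\circ F=g $ is semi-open, and since $ {\mathcal L_2}_{\infty} $ is irreducible the ``moreover'' clause of Lemma~\ref{circ} yields $ F $ semi-open---once one knows $ F $ is onto, or, more directly, once one shows that $ F([\pi]) $ contains a cylinder of $ X_{G_2} $ for each cylinder $ [\pi] $ of $ X_{G_1} $. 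This is where local finiteness enters: now $ [\pi] $ is compact, so $ F([\pi]) $ is compact; $ {\mathcal L_2}_{\infty}(F([\pi]))=f({\mathcal L_1}_{\infty}([\pi])) $ contains a cylinder $ [w] $ of $ X_2 $ by the two semi-openness hypotheses; and $ {\mathcal L_2}_{\infty}^{-1}([w]) $ is a disjoint union of clopen cylinders of $ X_{G_2} $, of which the compact set $ F([\pi]) $ meets only finitely many, so a Baire-category argument inside $ [w] $ produces a cylinder of $ X_{G_2} $ contained in $ F([\pi]) $.

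I expect the main obstacle to be the non-compactness of the covers $ X_{G_i} $: ``the continuous image of a closed set is closed'' is not available, so neither the ``composition of irreducibles is irreducible'' step of part~(1) nor the cylinder-lifting step in the converse of part~(2) is automatic; both must be handled by reducing to the doubly transitive points or, for the converse, by using the compactness of cylinders that local finiteness supplies.
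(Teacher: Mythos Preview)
Your plan for part~(1) and for the forward implication of part~(2) matches the paper's: both rest on the commuting square, Theorem~\ref{Fischer}, and Lemmas~\ref{lem:irr} and~\ref{circ}. You spell out with doubly transitive points what the paper compresses into a one-line citation, but the content is the same.

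The gap is in your converse for part~(2). The Baire step does not finish the job: writing $[w]$ as the finite union of the closed images $\mathcal{L}_{2\infty}\bigl(F([\pi])\cap C_j\bigr)$ only yields a sub-cylinder $[w']$ of $X_2$ inside one of these \emph{images}; it does not place a cylinder of $X_{G_2}$ inside $F([\pi])\cap C_j$, and iterating the step merely produces a shrinking nest of cylinders with no termination. What is missing is exactly the tool you invoked in part~(1) but dropped here, the degree-one property of $\mathcal{L}_{2\infty}$: once $F([\pi])$ is compact (this is where local finiteness of $G_1$ is used) and $\mathcal{L}_{2\infty}\bigl(F([\pi])\bigr)$ contains an open set $V\subseteq X_2$, every doubly transitive $y\in V$ has its unique $\mathcal{L}_{2\infty}$-preimage lying in $F([\pi])$; those preimages are dense in the open set $\mathcal{L}_{2\infty}^{-1}(V)$ (here semi-openness of $\mathcal{L}_{2\infty}$ is used), so $\mathcal{L}_{2\infty}^{-1}(V)\subseteq F([\pi])$ since the latter is closed. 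This is, up to packaging, the paper's argument: the paper shrinks $U$ so that $g(U)=V$ is open and $\overline{U}$ is compact, and then uses the degree-one property in a contradiction argument to show that $F(U)$ is in fact open, not merely with nonempty interior. The same degree-one reasoning would also establish that $F$ is onto, after which your Lemma~\ref{circ} route would go through; but Baire category alone, without the degree-one input, does not close the argument.
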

\begin{proof}\ \ 
 (1)    is a consequence of Lemma~\ref{lem:irr} and Theorem~\ref{Fischer}.

For the proof of (2),  let $ F $ be semi-open and $U$ an open subset of $X_1$ and set   $$g:= {\mathcal{L}_2}_\infty\circ F\, (=f\circ {\mathcal{L}_1}_\infty).$$
 Then, there is  an open set $V\subseteq g({\mathcal{L}_1}^{-1}_\infty(U))$. But
 $ g({\mathcal{L}_1}^{-1}_\infty(U))\subseteq f(U)$ and so $V\subseteq f(U)$ and consequently
 $f$ is semi-open.

For the converse suppose $f$ is semi-open and let $U$ be an open subset of $X_{G_1}$. Since by Theorem \ref{r-r magic},
$g$ is semi-open, we may assume that there is $V$, an open subset of $X_2$, such that
 $g(U)=V$; otherwise, replace $U$ with $U\cap g^{-1}(V)$, $V\subseteq g(U)$.
Also, by the fact that $X_{G_i}$ is locally compact, we may assume that $\overline U$ is a compact subset of $X_{G_1}$ and so $X_{G_2}\setminus F(\overline{U})$ an open subset of $X_{G_2}$. 
Furthermore, we choose $U$ so small so that $X_{G_2}\setminus F(\overline{U})\ne \emptyset$.
By these assumptions, we will show that $W=F(U)$ is open in $X_{G_2}$. 
Suppose the contrary. Thus there is  $w\in W$ which is a boundary point of $W$ and then indeed $w\in \partial F(\overline{U})$.
Choose a sequence of doubly transitive points $\{x_n\}$ in $X_{G_2}\setminus F(\overline{U})$
 approaching $w$. 
Since $X_{G_2}\setminus F(\overline{U})$ is open, such a sequence exists and
$$\lim_{n\rightarrow \infty}{\mathcal{L}_2}_\infty(x_n)={\mathcal{L}_2}_\infty(w)\in V.$$
 But  ${\mathcal{L}_2}_\infty$ is one-to-one on the set of doubly transitive points which means that ${\mathcal{L}_2}_\infty(x_n)\in X_2\setminus V$ and this in turn implies that our open set $V$ contains a boundary point which is absurd.
\end{proof}

Now let $ X $ be a non-SFT sofic shift with the Fischer cover $ \mathcal G=(G,\,\mathcal L) $ and let $ f=\mathcal L_{\infty} $ and $ F=Id:X_{G}\rightarrow X_{G} $. 
Then, $ F $ is open while $ f $ cannot be open \cite[Proposition 2.3]{J1}.
 So Theorem \ref{liftt} does not hold for open maps.

\subsection{On the bi-continuing codes and bi-closing codes}
A code $ \phi:X\rightarrow Y  $ is called \emph{right-continuing almost everywhere (a.e.)} if whenever $x$ is left transitive in $X$ and $\phi(x)$ is left
asymptotic to a point $y\in Y$, then there exists $\overline{x}\in X$ such that $\overline{x}$ is left asymptotic
to $x$ and $\phi(\overline{x})=y$. Similarly we have the notions called left-continuing a.e. and
bi-continuing a.e.

If $\phi: X\rightarrow Y  $ is open with a uniform lifting length, 
then it is bi-continuing (everywhere) with a bi-retract \cite[Lemma 2.1]{J2}. 
The following shows that a semi-open code with a uniform lifting length is not necessarily bi-continuing a.e. with a bi-retract.

\begin{example}
This example was constructed in \cite{J2} to show that a continuing code may not have a retract; we use it for our prementioned purpose. 

 Let $  X$ be a shift space on the alphabet
$ \{1,\,\overline{1},\,2,\,3\} $ defined by forbidding $\{\overline{1}2^{n}3:\,n\geq0\}  $, and let $  Y	$ be the full $  3$-shift
$ \{1,\,2,\,3\}^{\mathbb Z} $. Define $\phi=\Phi_{\infty}: X\rightarrow Y  $ by letting $  \Phi(\overline{1})=1$ and $  \Phi(a)=a$ for all $ a\neq{\overline 1 }$. The map 
$  \phi$ is bicontinuing and it has (left  continuing) retract \cite{J2}.
 Now for $n\in\mathbb{N}$, consider a left transitive point $ x=\cdots\overline{1}2^{n}.22^{\infty}$ and pick
 $ y=\cdots12^{n}.23^{\infty}\in Y$
 so that $ \phi(x) $ is left asymptotic to $ y $. One can readily
see that $  \phi$ does not have right  continuing a.e. retract.

We show that $ \phi $ is semi-open with a uniform lifting length. Let $ [a_{-n}\cdots a_{n}] $ be a central $ 2n+1 $ cylinder in $X$ and $\Phi(a_{i})=b_{i}$, $-n\leq i\leq n$. Thus
if $ a_{i}\neq\overline{1} $, then $ \phi( [a_{-n}\cdots a_{n}] )= [b_{-n}\cdots b_{n}]  $;
otherwise, $ [1b_{-n}\cdots b_{n}1]\subseteq 
[b_{-n}\cdots b_{n}1]\subseteq\phi( [a_{-n}\cdots a_{n}] )$. So $ \phi $ is semi-open with a uniform lifting length as required.
\end{example}
If $X$ is an irreducible sofic shift, then by \cite[Corollary 4.4.9]{LM},
\begin{equation}\label{subsystem}
h(Y)<h(X), \  Y\mbox{ is a proper subsystem of } X.
\end{equation}
This condition is sufficient to have double transitivity a  totally invariant property  for finite-to-one factor codes. That is,
\begin{lem}\label{doubly}
Suppose $  X$ is compact and $\phi: X\rightarrow Y  $ a finite-to-one factor code. If  either $  X$  satisfies \eqref{subsystem} or $ \phi $ is irreducible, then $  x\in X$ is doubly transitive if and 
only if $ \phi(x)$ is.
\end{lem}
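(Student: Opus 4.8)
The statement to prove is Lemma~\ref{doubly}: for a finite-to-one factor code $\phi:X\to Y$ with $X$ compact, if either $X$ satisfies \eqref{subsystem} or $\phi$ is irreducible, then $x$ is doubly transitive iff $\phi(x)$ is. The plan is to split into the two implications, handle the easy one first, and then treat the hard direction separately under each hypothesis.

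\textbf{The easy implication.} Suppose $x\in X$ is doubly transitive. Since $\phi$ is a factor code, every word $v\in\mathcal B(Y)$ is the image $\Phi(u)$ of some word $u\in\mathcal B(X)$; as $u$ occurs in $x$ infinitely often to the left and right, $v=\Phi(u)$ occurs in $\phi(x)$ infinitely often to the left and right. Hence $\phi(x)$ is doubly transitive. This direction uses neither hypothesis and no finite-to-one assumption.

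\textbf{The hard implication: $\phi(x)$ doubly transitive $\Rightarrow x$ doubly transitive.} This is where the two hypotheses enter, and I expect it to be the main obstacle. The point is that a priori $x$ could fail to be doubly transitive even though its image is: the preimage of a doubly transitive point is a finite set (degree considerations, since $\phi$ is finite-to-one over a factor code — here I would invoke that $\phi$ has a well-defined degree $d$ on doubly transitive points and $|\phi^{-1}(\phi(x))|\le d'$ for suitable bound, or more elementarily just that fibers over such points are finite), and one must show none of these finitely many preimages can be ``degenerate''.

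First, under hypothesis \eqref{subsystem}: Let $X' = \overline{\{\sigma^n x : n\in\mathbb Z\}}$ be the orbit closure of $x$; it is a subshift. Then $\phi(X')$ is a subshift containing the doubly transitive point $\phi(x)$, so $\phi(X') = Y$ (the orbit closure of a doubly transitive point is the whole space). Thus $\phi|_{X'}:X'\to Y$ is onto and finite-to-one, hence $h(X')=h(Y)=h(X)$ (entropy is preserved under finite-to-one factor codes, \cite[Theorem 8.1.16]{LM}). By \eqref{subsystem}, a proper subsystem of $X$ has strictly smaller entropy, so $X'=X$, which says exactly that $x$ is doubly transitive. (One must check $X$ is irreducible — this is built into the way \eqref{subsystem} is stated, applying to $X$ irreducible sofic; if $X$ is merely assumed compact with \eqref{subsystem} holding, the argument still goes through since the conclusion $X'=X$ forces $X$ to be the orbit closure of a transitive point.)

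Second, under the hypothesis that $\phi$ is irreducible: again set $X'=\overline{\{\sigma^n x\}}$, a closed $\sigma$-invariant subset of $X$. As above $\phi(X') \supseteq \overline{\{\sigma^n\phi(x)\}} = Y = \phi(X)$, so $\phi(X')=\phi(X)$. Since $\phi$ is irreducible, the only closed $A\subseteq X$ with $\phi(A)=\phi(X)$ is $A=X$; hence $X'=X$, i.e.\ $x$ is doubly transitive. This case is actually cleaner and does not even need the finite-to-one hypothesis — it is a direct application of the definition of irreducibility.

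\textbf{Main obstacle.} The genuinely delicate point is the entropy step in the \eqref{subsystem} case: one needs that $\phi$ restricted to the subsystem $X'$ is still finite-to-one (immediate, as a restriction of a finite-to-one map) and that a finite-to-one factor code preserves entropy, together with the fact that $\phi(X')=Y$ forces $h(X')\ge h(Y)=h(X)$ while $X'\subseteq X$ forces $h(X')\le h(X)$, pinning $h(X')=h(X)$ and then invoking \eqref{subsystem} to rule out $X'\subsetneq X$. I would make sure the standing assumptions (compactness of $X$, and $Y$ inheriting irreducibility from having a doubly transitive point) are explicitly in place so that ``orbit closure of a doubly transitive point $=$ whole space'' is legitimate on both sides.
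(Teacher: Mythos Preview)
Your overall strategy---produce a proper closed invariant $Z\subseteq X$ with $\phi(Z)=Y$ and then contradict either \eqref{subsystem} via entropy or irreducibility directly---is exactly the paper's approach (the paper cites \cite[Theorem 3.4]{A1} for the first case and invokes the proof of \cite[Lemma 9.1.13]{LM} for the second). The easy implication is fine.

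There is, however, a genuine gap in your hard implication. You take $X'=\overline{\{\sigma^n x:n\in\mathbb Z\}}$ and, after showing $X'=X$, conclude ``which says exactly that $x$ is doubly transitive''. It does not: $X'=X$ says precisely that $x$ is \emph{transitive}, i.e.\ every block of $X$ occurs somewhere in $x$. Doubly transitive requires every block to occur infinitely often to the left \emph{and} to the right, which is strictly stronger. (In the full $2$-shift, $x=0^{\infty}.w_1w_2w_3\cdots$, with $(w_i)$ enumerating all words, has dense orbit but is not doubly transitive.) So as written neither case actually reaches the conclusion.

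The repair is simple and keeps your argument intact: assume some block $u$ fails to occur infinitely often to the right of $x$, and replace the orbit closure by the $\omega$-limit set $\omega(x)=\bigcap_{N}\overline{\{\sigma^n x:n\ge N\}}$. Then $\omega(x)$ is closed, $\sigma$-invariant, and contains no point in which $u$ occurs, hence is a \emph{proper} subsystem of $X$; moreover $\phi(\omega(x))=\omega(\phi(x))=Y$ because $\phi(x)$ is doubly transitive. Now your entropy argument (for \eqref{subsystem}) and your irreducibility argument go through verbatim with $\omega(x)$ in place of $X'$; the left-sided failure is handled symmetrically with the $\alpha$-limit set. This is the content hidden behind the paper's appeal to the proof of \cite[Lemma 9.1.13]{LM}.
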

\begin{proof}
When $  X$  satisfies \eqref{subsystem}, it is a consequence of \cite[Theorem 3.4]{A1}. So suppose $ \phi $ is irreducible. If $\phi(x)  $ is doubly transitive, but $  x$ is not, then the proof of \cite[Lemma 9.1.13]{LM} implies that there will be a proper subshift $ Z $ of $ X $ with $ \phi(Z)=Y $; and this contradicts the irreducibility of $ \phi $.
\end{proof}
\begin{thm}\label{bi-retract} 
Suppose $X$ and $\phi$ satisfy the hypothesis of Lemma
 \ref{doubly} and
 $ Y $ a SFT.
 If $ \phi $ is bi-continuing a.e. with a bi-retract, then it will be open with a uniform lifting length.
\end{thm}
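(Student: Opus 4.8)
The plan is to show that under the stated hypotheses the map $\phi$ is open by verifying the criterion of Lemma~\ref{lem:open}: for each $l$ there is $k$ so that $\phi(x)_{[-k,k]}=y_{[-k,k]}$ forces the existence of $\overline x$ with $\overline x_{[-l,l]}=x_{[-l,l]}$ and $\phi(\overline x)=y$. Since $\phi$ is bi-continuing a.e.\ with a bi-retract, let $n$ be a bi-retract; so if $x$ is left transitive and $\phi(x)_{(-\infty,0]}=y_{(-\infty,0]}$, there is a left-asymptotic-to-$x$ preimage of $y$ agreeing with $x$ on $(-\infty,-n]$, and dually on the right. The first step is to splice these two one-sided continuations: given $x$ with $\phi(x)_{[-k,k]}=y_{[-k,k]}$ (with $k$ chosen comfortably larger than $l+n$ and the coding window), run the right-continuing a.e.\ lifting from the coordinate $-l$ onward and the left-continuing a.e.\ lifting from $+l$ backward, obtaining $\overline x$ that agrees with $x$ on $[-l,l]$ and maps to $y$ off a bounded central window — the standard glueing argument, using that $Y$ is an SFT so the two one-sided legs are compatible in the middle once they agree with $\phi(x)$ on a block longer than the SFT memory.

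The subtlety, and the reason Lemma~\ref{doubly} is invoked, is that ``right-continuing a.e.''\ only guarantees liftings over \emph{left transitive} points, so I must first reduce to the case where the relevant one-sided rays sit inside (left/right) transitive points of $Y$. Here is where I would use that $Y$ is an SFT (hence irreducible sofic after passing to the relevant component, or one argues directly): a left transitive point of $Y$ can be chosen left-asymptotic to $y_{(-\infty,0]}$, and pulling back through $\phi$ together with Lemma~\ref{doubly} — which says doubly transitive points of $X$ and $Y$ correspond — lets me arrange the source point to be left transitive in $X$ as well, so the a.e.\ retract property actually applies. This is the step I expect to be the main obstacle: carefully threading the transitivity bookkeeping so that the retract hypothesis is legitimately usable, rather than only ``a.e.'', and then checking that the bound $|k-l|$ produced is independent of $l$.

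Once the glued preimage $\overline x$ is produced with $\overline x_{[-l,l]}=x_{[-l,l]}$ and $\phi(\overline x)=y$, Lemma~\ref{lem:open} gives that $\phi$ is open. For the uniform lifting length, I would track the constants: the left retract contributes a fixed overhang, the right retract another fixed overhang, and the SFT memory of $Y$ a fixed amount; the coding length of $\phi$ is fixed. Hence the $k$ that works for a given $l$ satisfies $k \le l + c$ for a constant $c$ depending only on $\phi$, $Y$, and the chosen bi-retract, so $\sup_l |k-l| < \infty$ and $\phi$ has a uniform lifting length. I would close by remarking that semi-openness of $\phi$ is not even needed as a separate hypothesis here — it follows a posteriori from openness — whereas it was essential in Theorem~\ref{bi-retract}'s companion results where finite-to-one-ness did the work instead.
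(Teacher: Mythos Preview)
Your outline has a genuine gap at the point you yourself flag as ``the main obstacle,'' and the workaround you sketch does not close it.

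The difficulty is this. You try to verify Lemma~\ref{lem:open} directly: given \emph{arbitrary} $x\in X$ and $y\in Y$ with $\phi(x)_{[-k,k]}=y_{[-k,k]}$, produce $\overline{x}$. But the a.e.\ retract only applies when the \emph{source} point is left (resp.\ right) transitive, and you cannot make a given $x$ transitive after the fact; Lemma~\ref{doubly} tells you that preimages of doubly transitive points are doubly transitive, not that an arbitrary $x$ can be replaced by a transitive one while keeping $\phi(x)_{[-k,k]}=y_{[-k,k]}$. Moreover, your ``run the right lifting from $-l$ onward and the left lifting from $+l$ backward, then glue'' cannot work as stated: the retract needs $\phi(x)$ and the target to agree on an entire half-line, not just a window, and the two one-sided preimages you would obtain live in $X$, which is not assumed SFT, so there is no gluing available there. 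Agreement in $Y$ on a block longer than the memory does not let you splice in $X$.

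The paper avoids both problems by arguing differently. It does not verify Lemma~\ref{lem:open} for arbitrary $(x,y)$. Instead it fixes a cylinder $[u]$, chooses a left-transitive $x\in[u]$, and shows that a fixed cylinder $[\phi(x)_{[-l-n,\,l+n]}]$ lies inside $\phi([u])$. The two retracts are applied \emph{sequentially}, not in parallel: first the right retract is applied with target an auxiliary point $\hat y$ (built by splicing $\phi(x)_{(-\infty,0]}$ with a doubly transitive $y$ on the right, which is legal because $Y$ is SFT), producing $\overline{x}$; Lemma~\ref{doubly} is invoked here to conclude $\overline{x}$ is doubly transitive, so that the left retract can then be applied to $\overline{x}$ with target $y$. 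This handles only doubly transitive $y$; the passage to arbitrary $y$ is by the compactness argument you are missing entirely---$\phi([u])$ is closed and doubly transitive points are dense in the target cylinder, so a limit finishes. Finally openness (not just semi-openness) comes from running the argument for every $x'\in[u]$, noting that the cylinder $[\phi(x')_{[-l-n,l+n]}]$ depends only on $x'_{[-l-n,l+n]}$. The uniform lifting length $k=l+n$ drops out of this, as you correctly anticipated.
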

\begin{proof}
Suppose that $  \phi$ is bi-continuing a.e. with a bi-retract $ n\in\mathbb N $. 
We may assume that our SFT system
 $  Y$ is
 an edge shift and $  \phi$ is
 a $  1$-block code. 
Let $ l\geq0 $ and $ u\in\mathcal B_{2l+1}(X) $.
 Choose $ x\in[u] $ to be a left transitive point and pick a doubly transitive point
 $ y\in Y $ with $ y_{[-l-n,\,l+n]}=\phi(x)_{[-l-n,\,l+n]} $. 
The point $ \hat{y} $ given by
$$\hat{y}_{i}=\left\{
\begin{tabular}{ll}
$\phi(x)_{i}$&$i\leq0,$ \\
$y_{i}$&$i>0.$ \\
\end{tabular}
\right .$$
is a doubly transitive point in $Y  $. Since $  n$ is a right continuing a.e. retract and $ \phi(x)_{(-\infty,\,l+n]}=\hat{y}_{(-\infty,\,l+n]} $,
 there is
$ \overline{x}\in X $ such that $\overline{x}_{(-\infty,\,l]}=x_{(-\infty,\,l]}  $ and $ \phi(\overline{x})=\hat{y}$. 
By Lemma \ref{doubly}, $ \overline{x} $ is a doubly transitive point and we have
 $ \phi(\overline{x})_{[-l-n,\,\infty)}=\hat{y}_{[-l-n,\,\infty)}=y_{[-l-n,\,\infty)} $. 
By the fact that  $n$ is also a left continuing a.e. retract, there is  $ \overline{\overline{x}}\in X $ such that
$ \overline{\overline{x}}_{[-l,\,\infty)}=\overline{x}_{[-l,\,\infty)} $ and $  \phi(\overline{\overline{x}})=y$.
 Note that $\overline{\overline{x}}_{[-l,\,l]}=\overline{x}_{[-l,\,l]}=x_{[-l,\,l]} $ which means that $ \overline{\overline{x}}$ is  in $[u] $. 

Now choose an arbitrary $y'\in [\phi(x)_{[-l-n,\,l+n]}]$ and pick  a sequence of doubly
 transitive points $ \{y^{m}\}_{m} $ in $[\phi(x)_{[-l-n,\,l+n]}]  $ such that $ y^{m}\rightarrow y' $ and let $\overline{\overline{x}}^m$ be a point in $[u]$ with $\phi(\overline{\overline{x}}^m)=y^m$. Thus  for all $ m $, 
$ y^{m}\in\phi([u]) $. But $ \phi([u]) $ is closed  and so 
$ y'\in\phi([u]) $ which implies that $[\phi(x)_{[-l-n,\,l+n]}]\subseteq\phi([u])  $. This shows that $\phi$ is semi-open.
 
It remains to prove that $\phi$ is open. 
Let again $u=u_{-l}\cdots u_l$ 
and  let $ x'\in[u] $ and choose a doubly transitive point
 $ x $   such that $ x_{[-l-n,\,l+n]}=x'_{[-l-n,\,l+n]} $. 
Since $ \phi $ is a $  1$-block code, $ \phi(x)_{[-l-n,\,l+n]}=\phi(x')_{[-l-n,\,l+n]} $. Also since $ Y $ is an edge shift, $[\phi(x')_{[-l-n,\,l+n]}] =[\phi(x)_{[-l-n,\,l+n]}]$. By above, $    [\phi(x)_{[-l-n,\,l+n]}]\subseteq\phi([u]) $. 
But $x'$ was arbitrary and so 
$  \phi$ is open with uniform lifting length.
 \end{proof}
By Theorem \ref{bi-retract} and \cite[Lemma 2.1]{J2}, we have the following:
\begin{cor}\label{cor}
Let $X$ and $\phi$ satisfy the hypothesis of Lemma \ref{doubly} and
 $ Y $ a SFT. If $ \phi $ is bi-continuing a.e. with a bi-retract, then it will be bi-continuing (everywhere) with a bi-retract.
\end{cor}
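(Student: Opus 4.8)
The plan is simply to compose the two facts recalled immediately before the statement. First, I would apply Theorem~\ref{bi-retract} to the given data: $X$ and $\phi$ satisfy the hypothesis of Lemma~\ref{doubly}, $Y$ is an SFT, and $\phi$ is bi-continuing a.e.\ with a bi-retract. That theorem then yields that $\phi$ is open with a uniform lifting length. This is the only substantive ingredient, and since it is already established there is nothing to redo here.

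Second, I would invoke \cite[Lemma 2.1]{J2}, which asserts exactly that an open code with a uniform lifting length is bi-continuing (everywhere) and admits a bi-retract. Chaining these two implications gives the corollary. I do not expect any genuine obstacle: the whole content sits inside Theorem~\ref{bi-retract}, and the purpose of this corollary is to repackage its output through \cite[Lemma 2.1]{J2}. The single point I would make explicit is that the hypotheses line up cleanly --- ``the hypothesis of Lemma~\ref{doubly}'' (that is, $X$ irreducible sofic satisfying \eqref{subsystem}, or $\phi$ irreducible) together with ``$Y$ a SFT'' are precisely what Theorem~\ref{bi-retract} requires, so that the passage from bi-continuing a.e.\ to bi-continuing everywhere introduces no extra assumptions.
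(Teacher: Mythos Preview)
Your proposal is correct and is exactly the paper's approach: the corollary is stated with the preamble ``By Theorem~\ref{bi-retract} and \cite[Lemma 2.1]{J2}, we have the following,'' so the intended proof is precisely the two-step chaining you describe. One small inaccuracy in your parenthetical: the hypothesis of Lemma~\ref{doubly} is that $X$ is compact, $\phi$ is a finite-to-one factor code, and either $X$ satisfies \eqref{subsystem} or $\phi$ is irreducible --- not specifically that $X$ is irreducible sofic --- but this does not affect the argument.
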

There is another situation where a bi-continuing a.e. code implies bi-continuing. In fact, Ballier  interested in sofics in \cite{B}, 
states and proves the next theorem for when $ X $ is an irreducible sofic and $ k=0 $. 
His proof exploits only the irreducibility of $ X $  which is provided here.
Hence we have
\begin{thm}\label{Ballier} 
Suppose $  X$ is an irreducible shift space and $  Y$  an irreducible SFT.
A right-continuing a.e. factor map $\phi:X\rightarrow Y  $ with retract $ k $
 is right-continuing
(everywhere) with retract $ k $. 
\end{thm}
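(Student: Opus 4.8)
The plan is to upgrade the ``almost everywhere'' lifting property to ``everywhere'' by a standard approximation--plus--compactness argument, in which the irreducibility of $X$ supplies left transitive approximants and the finite type condition on $Y$ lets us splice infinite rays. Write $\phi=\Phi^{[-m,m]}_{\infty}$ and fix $M\ge 1$ so that $Y$ is defined by forbidding a set of words each of length at most $M$. Fix an arbitrary $x\in X$ and $y\in Y$ with $\phi(x)_{(-\infty,0]}=y_{(-\infty,0]}$; the goal is to produce $\bar x\in X$ with $\phi(\bar x)=y$ and $x_{(-\infty,-k]}=\bar x_{(-\infty,-k]}$.

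For each integer $n>\max\{m+M,\,k\}$ use that $X$ is irreducible (so its left transitive points are dense) to pick a left transitive $x^{(n)}\in X$ with $x^{(n)}_{[-n,m]}=x_{[-n,m]}$; then $\phi(x^{(n)})_i=\phi(x)_i=y_i$ for $-n+m\le i\le 0$. Now define $y^{(n)}$ by $y^{(n)}_i=\phi(x^{(n)})_i$ for $i\le 0$ and $y^{(n)}_i=y_i$ for $i\ge 1$. The key point --- and the place where the hypotheses genuinely enter --- is that $y^{(n)}\in Y$: since $n>m+M$, every subword of $y^{(n)}$ of length at most $M$ lies either inside the ray $\phi(x^{(n)})_{(-\infty,0]}$ or inside the ray $y_{[-n+m,\infty)}$, and both rays occur in points of the SFT $Y$, so no forbidden word can appear. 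Moreover $y^{(n)}_{(-\infty,0]}=\phi(x^{(n)})_{(-\infty,0]}$, so applying the right-continuing a.e. retract $k$ to the left transitive point $x^{(n)}$ produces $\bar x^{(n)}\in X$ with $\phi(\bar x^{(n)})=y^{(n)}$ and $x^{(n)}_{(-\infty,-k]}=\bar x^{(n)}_{(-\infty,-k]}$.

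Finally pass to the limit. By construction $y^{(n)}$ agrees with $y$ on $[-n+m,\infty)$, so $y^{(n)}\to y$; and $\bar x^{(n)}$ agrees with $x$ on $[-n,-k]$, because it agrees with $x^{(n)}$ on $(-\infty,-k]$ while $x^{(n)}$ agrees with $x$ on $[-n,m]$. Choosing, by compactness of $X$, a subsequence $\bar x^{(n_j)}\to\bar x$, continuity of $\phi$ gives $\phi(\bar x)=\lim_j y^{(n_j)}=y$, and coordinatewise convergence gives $\bar x_i=x_i$ for all $i\le -k$; thus $\bar x$ is the required lift and $\phi$ is right-continuing everywhere with retract $k$. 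Apart from the construction of $y^{(n)}$ flagged above, the only ingredients are the routine facts that an irreducible subshift has a dense set of left transitive points and that an SFT is stable under gluing two admissible rays overlapping in a window longer than its memory, together with the bookkeeping that the intervals $[-n+m,\infty)$ and $[-n,-k]$ exhaust the two relevant half-lines as $n\to\infty$.
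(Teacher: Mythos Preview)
Your proof is correct. The paper does not give its own proof of this theorem; it simply points to Ballier's argument (stated there for $X$ irreducible sofic and $k=0$) and observes that only the irreducibility of $X$ is actually used. Your approximation--by--left--transitive--points plus compactness argument, with the SFT gluing to manufacture $y^{(n)}\in Y$, is precisely the kind of proof that observation is alluding to, so there is nothing to compare: you have supplied in full the argument the paper only cites.
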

The next theorem gives
a sufficient condition for a factor code being bi-continuing a.e..
\begin{thm}\label{syn bi-continuing a.e.} 
Let $ X $ be synchronized and $ \phi:X\rightarrow Y $ a semi-open code. Then, $ \phi $ is bi-continuing a.e..
\end{thm}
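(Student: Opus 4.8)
The plan is to normalize the problem, reduce the source to a shift of finite type, and then use semi‑openness together with the synchronized structure of the image to produce the required backward extension.

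\smallskip
It suffices to prove that $\phi$ is right‑continuing a.e.; the left‑continuing‑a.e. assertion is obtained by the mirror argument with the \emph{left} Fischer cover of $X$ and right‑transitive points. So fix a left‑transitive $x\in X$ and $y\in Y$ with $\phi(x)$ left asymptotic to $y$; applying a power of $\sigma$ we may assume $\phi(x)_i=y_i$ for $i\le 0$, and we must find $\bar x\in X$ with $\bar x_i=x_i$ for all sufficiently small $i$ and $\phi(\bar x)=y$. Let $\mathcal G=(G,\mathcal L)$ be the (right) Fischer cover of $X$. By Theorem~\ref{r-r magic}, $\mathcal L_\infty$ is semi-open, so $\psi:=\phi\circ\mathcal L_\infty\colon X_G\to Y$ is a semi-open code from the \emph{irreducible shift of finite type} $X_G$ (a composition of semi-open maps is semi-open). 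Since $x$ is left transitive it contains a magic word arbitrarily far to the left, so it lifts to a path $\xi\in X_G$ with $\mathcal L_\infty(\xi)=x$; using irreducibility of $X_G$ and right‑resolvingness one checks that $\xi$ is again left transitive, and $\mathcal L_\infty$ carries left‑asymptotic pairs to left‑asymptotic pairs. Hence, if some $\bar\xi$ left asymptotic to $\xi$ satisfies $\psi(\bar\xi)=y$, then $\bar x:=\mathcal L_\infty(\bar\xi)$ works. So we may assume from now on that $X$ is an irreducible edge shift and (after a harmless recoding) that $\phi$ is a $1$-block code.

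\smallskip
Next I would bring in the synchronized structure of $Y$. By Theorem~\ref{s to s}, $Y$ is synchronized; fix a synchronizing word $\beta$ of $Y$ and the central cylinder $[\beta]$ it determines. Since $\phi$ is a block code, $\phi^{-1}([\beta])=\bigsqcup_{j=1}^{N}[u_j]$ for finitely many central cylinders of $X$, and since $\phi$ is onto, $[\beta]=\bigcup_{j}\phi([u_j])$; thus an occurrence of $\beta$ at a coordinate of a point of $Y$ is witnessed by a preimage carrying some $u_j$ at that coordinate. As $\phi(x)$ is left transitive in $Y$, $\beta$ occurs in $\phi(x)_{(-\infty,0]}=y_{(-\infty,0]}$ at coordinates tending to $-\infty$, so $y$ has preimages carrying some $u_j$ arbitrarily far to the left; and $x$, being left transitive in the edge shift, contains each $u_j$ arbitrarily far to the left. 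One then constructs $\bar x$ by retaining $x_{(-\infty,M]}$ for a suitable $M$, running a path (supplied by irreducibility of $X$) whose length is adjusted modulo the period of $X$, and then continuing along the right ray of such a preimage of $y$, placed so that $\phi$ of the resulting point agrees with $y$ on both of its tails; semi‑openness of $\phi$ is exactly what guarantees that the middle connecting segment can be chosen so that $\phi(\bar x)$ equals $y$ there too. The resulting $\bar x$ is left asymptotic to $x$ and satisfies $\phi(\bar x)=y$.

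\smallskip
The hard part will be this last construction. Although an edge shift may be cut and reglued at any coordinate, one must splice the \emph{fixed} left ray of $x$ onto the right ray of a preimage of $y$ without introducing an overall shift, which forces one to match simultaneously a coordinate and a residue modulo the period of $X$; this is precisely where the synchronized structure of $Y$ (Theorem~\ref{s to s}) and the supply of preimages provided by semi‑openness are used, and it is the only place where the hypothesis that $\phi$ is semi‑open enters essentially. As already indicated, the left‑continuing‑a.e. statement follows symmetrically.
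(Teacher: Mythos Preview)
Your plan has two genuine gaps.

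First, lifting to the Fischer cover $(G,\mathcal L)$ does not place you in a shift of finite type unless $X$ happens to be sofic. For a general synchronized system $G$ has countably many vertices, so $X_G$ is an edge shift over an infinite alphabet and is neither compact nor an SFT. Several of your later steps then break down: $\phi^{-1}([\beta])$ need not decompose into \emph{finitely} many cylinders $[u_j]$, ``the period of $X$'' is undefined, and SFT-style connecting-path arguments are unavailable. Second, and more seriously, the part you yourself label ``hard'' is only described, not proved. You want to glue a left ray of $x$ to a right ray of some preimage of $y$ via a path in $G$, asserting that ``semi-openness of $\phi$ is exactly what guarantees that the middle connecting segment can be chosen so that $\phi(\bar x)$ equals $y$ there too.'' But semi-openness only says that $\phi$ carries each open set onto a set with nonempty interior; it gives no mechanism for producing, between two prescribed vertices of $G$, a path whose $\phi$-image equals a \emph{prescribed} word of $Y$. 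Invoking a synchronizing word $\beta$ of $Y$ via Theorem~\ref{s to s} does not help either: synchronization in $Y$ lets you concatenate in $Y$, whereas you need to concatenate in $X$ while controlling the $\phi$-image everywhere.

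The paper's argument sidesteps both difficulties by staying in $X$ and using a synchronizing word $\alpha$ of $X$ rather than of $Y$. Semi-openness is applied once, to the cylinder $[\alpha]\subseteq X$, yielding a cylinder ${}_l[u]\subseteq\phi({}_0[\alpha])$. Left transitivity of $x$ forces $u$ to occur far to the left in $\phi(x)$, hence in $y$; thus $y$ lies in a translate of ${}_l[u]$ and therefore has a preimage $z$ carrying $\alpha$ at a known coordinate. One then splices $x$ and $z$ along this occurrence of $\alpha$: since $\alpha$ is synchronizing \emph{for $X$} the splice is automatically a point of $X$, and since it takes place to the left of coordinate $0$ the image agrees with $y$ on both sides. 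No lift to a cover, no SFT structure, and no appeal to Theorem~\ref{s to s} are needed.
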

\begin{proof}
Let $x=\cdots x_{-1}x_{0}x_{1}\cdots\in X$ be a left transitive point and $y\in Y$ such that $ \phi(x)_{(-\infty,\,0]}=y_{(-\infty,\,0]} $. Since $x$ is left transitive, any word in $X$ happens infinitely many often on the left of $x$. So without loss of generality, assume that $x_0$ is the synchronizing word. 

By semi-openness, there is a cylinder $ _{l}[u] $ contained in $ \phi(_{0}[x_0]) $ where $ _{l}[u] $ denotes the set 
$ \{y\in Y:\,y_{[l,\,l+|u|-1]}=u\} $. Since $x$ is left transitive, there exist infinitely many $k>0$ such that $ \sigma^{-k}(\phi(x))\in {_{l}}[u] $,
or equivalently, $ \phi(x)\in \sigma^{k}({_{l}}[u])={_{l-k}}[u] $. Choose a  sufficiently large $ k $ so that $ l-k+|u|-1<0 $. 

Now since $ \phi(x)_{(-\infty,\,0]}=y_{(-\infty,\,0]} $, $ y\in {  _{l-k}}[u] $ or in fact $y\in _{l-k}[v]$
 where $$v=y_{l-k}\ldots y_{-1}y_0=u_{l-k}\ldots u_{l-k+|u|-1}y_{l-k+|u|}\ldots y_{-1}y_0.$$
We have ${_{l-k}}[v]\subseteq {_{l-k}}[u]\subseteq\phi(_{-k}[x_0]) $.
Set $W=\phi^{-1}( _{l-k} [v])\cap {_0[x_0]}$ and note that
 $W$ is an open set containing $x$ and $y\in \phi(W)$.
 Therefore,  there must be $ z\in W$ such that $ \phi(z)=y $ and
define 
$$
 \overline{x}_i=\left\{
\begin{tabular}{ll}
$x_i$&$i\leq 0,$ \\
$z_i$&$i\geq 0.$ \\
\end{tabular}
\right .
$$
Since $ x_0 $ is a synchronizing word, $ \overline{x}\in X $ and so we have a $\overline{x}$ which 
is left-asymptotic to $ x $ and $ \phi(\overline{x})=y $. This means $ \phi $ is right-continuing a.e.; and similarly, it is left-continuing a.e..
\end{proof}

\bibliographystyle{amsplain}

\end{document}